\newtheorem{thm}{Theorem}
\newtheorem{lem}[thm]{Lemma}
\newtheorem{prop}[thm]{Proposition}
\newtheorem{coro}[thm]{Corollary}
\theoremstyle{definition}
\newtheorem{defn}[thm]{Definition}
\newtheorem{example}[thm]{Example}
\theoremstyle{remark}
\newtheorem{rmk}[thm]{Remark}
\DeclareMathOperator{\supp}{supp}
\DeclareMathOperator{\dist}{dist}
\DeclareMathOperator{\vol}{vol}
\newcommand{\R}{\mathbb{R}}
\newcommand{\Z}{\mathbb{Z}}
\newcommand{\Q}{\mathbb{Q}}
\newcommand{\N}{\mathbb{N}}
\newcommand{\lebesgue}{\mathscr L}
\newcommand{\T}{\mathbb T}
\newcommand{\magicK}{\bar C}
\renewcommand{\geq}{\geqslant}
\renewcommand{\leq}{\leqslant}
\newcommand{\kam}{{\textsc{kam}}}
\newcommand{\kamt}{KAM}
\DeclareMathOperator{\Hom}{Hom}
\newcommand{\Homt}[1]{\Hom^{#1}}
\date{}
\title{Weak KAM theory in higher-dimensional holonomic measure flows}
\author{Rodolfo R\'ios-Zertuche
}
\begin{document}

\maketitle
\begin{abstract}
We construct a weak \textsc{kam} theory for parameterized cobordisms and their relaxation, holonomic measures. %
We find a weak \textsc{kam} solution in that context, and we show that in many cases it corresponds to an exact form that satisfies a version of the Hamilton-Jacobi equation. Along the way, we give a characterization of minimizable Lagrangians, as well as some abstract weak \textsc{kam} machinery. \end{abstract}

\section{Introduction}
\label{sec:intro}

In this paper we construct a weak \kam{} theory for Lagrangian action functionals on a relaxation of the space of parameterized cobordisms. Let us briefly explain how this works. Given a manifold $M$ of dimension $d$ and two submanifolds $N_1$ and $N_2$ of dimension $n-1$, a cobordism joining $N_1$ and $N_2$ is a submanifold $B$ of dimension $n$ whose boundary is exactly the two submanifolds $N_1$ and $N_2$. Let us say that the manifold $B$ can be parameterized by a set of maps $\varphi_i\colon U_i\subseteq\R^n\to M$, $i=1,\dots, k$, so that the derivative $d\varphi_i$ is a map from some open subset $U_i\subseteq\R^n$ to $T^nM=TM\oplus\dots\oplus TM$, the Whitney sum of $n$ copies of $TM$ in such a way that $\{\varphi_i(U_i)\}_{i=1}^k$ is a partition of $M$ up to a set of measure zero. Then the action of $B$ with respect to the Lagrangian function $L\colon T^nM\to\R$ is defined to be
\[\sum_i\int_{U_i}L(d\varphi_i(x))\,d\lebesgue_{U_i}(x),\]
where $\lebesgue_U$ denotes the Lebesgue measure on $U$. This integral is, in general, dependent on the parameterization $\{\varphi_i\}_i$ of $B$. An example of such a functional is the squared ssurface area, which corresponds to setting
 \[L(x,v_1,\dots,v_n)=\vol(x,v_1,\dots,v_n)^2=|\det (g_x(v_i,v_j))_{i,j=1}^n|.\] 
 Another way to think about the action of the Lagrangian function $L$ is as the integral of $L$ over the measure
\[\mu_B=\sum_i(d\varphi_i)_*\lebesgue_{U_i}\]
on $T^nM$. One can also integrate differential forms with respect to $\mu_B$, since they define real-valued functions on $T^nM$; thus $\mu_B$ induces a current $T_{\mu_B}\colon \Omega^n(M)\to\R$ with boundary equal to the difference of the currents similarly induced for the submanifolds $T_{\mu_{N_1}}$ and $T_{\mu_{N_2}}$,
$\partial T_{\mu_B}=T_{\mu_{N_2}}-T_{\mu_{N_1}}$. Note that, up to a sign, $T_{\mu_{X}}$ is independent of the parameterization of the corresponding manifold $X$. Thus we consider the following relaxation: we replace the space $\mu_B$ of measures corresponding to parameterized cobordisms by the space of measures $\mu$ on $T^nM$ such that their induced current $T_\mu$ has the right boundary, $\partial T_{\mu}=T_{\mu_{N_2}}-T_{\mu_{N_1}}$; we term these measures \emph{holonomic measures}. 

The weak \kam{} theory was originally discovered \cite{fathibook} in a very different context, namely, in the study of geodesics of Lagrangian and Hamiltonian dynamical systems. In a sense, that would correspond to the $n=1$ case in our description above: instead of cobordisms, one simply considers curves joining points on a manifold $M$. The theory then gives a function $u\colon M\to\R$ that is a viscosity solution of the Hamilton--Jacobi equation, $H(du)=c$, where $H$ is the Hamiltonian and $c\in\R$. This gives a precise description of the asymptotic dynamics of many of the geodesics of the system. A relaxation similar to the one described above was also considered in that context (see for example \cite{matheractionminimizing91,contrerasiturriagabook}). 

The weak \kam{} theory we construct in this paper has many similarities and many differences with the one obtained in the original context. Among the differences, we can note that the function $u$ produced by the main theorem is no longer a function on the manifold $M$, but on a certain space of objects that can be understood as slices of holonomic measures, analogous to the measures $\mu_{N_j}$ above: just like we can consider a cylinder as a set of circles glued together, and the cylinder then as a curve in the space of circles, a curve in the space of \emph{argute slices} will give a holonomic measure, thus giving meaning to the notion of flow in the space of slices of holonomic measures. As was the case in the original context, the theory we describe can be interpreted as giving a description of the dynamics of the geodesic flow induced by the Lagrangian action, now in the space of argute slices. Instead of geodesics, we have action-minimizing holonomic measures, which are akin to action-minimizing cobordisms. The function $u$, which will be, in an appropriate sense, a weak \kam{} solution, will also in certain circumstances correspond to a differential form $\omega$ on $M$ that will satisfy a sort of Hamilton--Jacobi equation, $H(d\omega)=c$.

The organization of the paper is as follows. Section \ref{sec:whatis} develops a point of view of what a weak \kam{} theory is, and it introduces the concepts of Lagrangian category, the Lax-Oleinik semigroup, a weak \kam{} solution, and a finely \kam{}-amenable category; it then gives a few examples from the literature, before turning to the proof a general weak \kam{} theorem with mild assumptions on the class of Lagrangian categories it applies to. Section \ref{sec:slices} is devoted to the particular example of a Lagrangian category that we are interested in, in which the objects are $(n-1)$-dimensional currents that arise as time slices of holonomic measures; in this section we give sufficient conditions for such a Lagrangian category to be weak \kam{}-amenable and we show that in that case there are weak \kam{} solutions.
Section \ref{sec:coarsechar} gives the characterization of minimizable Lagrangian actions and the connection of weak \kam{} solutions on the Lagrangian category of currents to exact forms on a manifold, and explains in which sense they satisfy a Hamilton--Jacobi equation.
\vspace{1,5mm}

\section{What is a weak \kamt{} theory?}
\label{sec:whatis}
\subsection{Abstract formulation and examples}
\label{sec:weakkam}
\newcommand{\categ}{{\mathcal C}}

In this section, as a sort of preface, we give a point of view of what constitutes a weak \kam{} theory, for which we do an abstraction of some of the material in \cite{madernafathi}, and we relate it to the original construction of Fathi--Siconolfi \cite{fathibook}.

We use the language of categories, as it appears to fit naturally and to appropriately generalize the weak \kam{} theories that we know of. However, the reader unfamiliar with categories can do the following substitutions without losing much: instead of the collection of objects of the category, think of a topological space, and instead of the morphisms of the category, think of paths joining the points of the topological space.

\subsubsection{Lagrangian categories}
\label{sec:abstractsetting}

Recall that a category is a class of objects together with a class of associative morphisms between the objects, and such that every object is endowed with an identity morphism. For objects $x$ and $y$, the class of morphisms having domain $x$ and codomain $y$ is denoted $\Hom(x,y)$. We will slightly abuse notations denoting the class of objects of a category $\categ$ also by $\categ$, so that $x\in \categ$ means that $x$ is an object of $\categ$.

A category $\categ$ with morphisms 
\[\Hom(\categ)=\bigcup_{x,y\in\categ}\Hom(x,y)\]
is \emph{pre-Lagrangian} if it is
endowed with two functions 
\[A\colon \Hom(\categ)\to \R\quad\textrm{and}\quad\mathbf{t}\colon \Hom(\categ) \to [0,+\infty) \]
associating to each morphism $\gamma\in\Hom(x,y)$ the \emph{action} $A(\gamma)$ and the \emph{internal time duration} $\mathbf t(\gamma)$ satisfying the relations
\[A(\gamma\circ\eta)=A(\gamma)+A(\eta)\quad \textrm{and}\quad \mathbf t(\gamma\circ\eta)=\mathbf t(\gamma)+\mathbf t(\eta)\]
for all objects $x,y,z\in\categ$ and all morphisms $\eta\in \Hom(x,y)$ and $\gamma\in \Hom(y,z)$ between them,  as well as 
\[A(\operatorname{id}_x)=0=\mathbf t(\operatorname{id}_x)\] 
for the identity morphism $\operatorname{id}_x\in \Hom(x,x)$.

It will be useful to denote, for $t\geq 0$, $x,y\in\categ$, by $\Homt{t}(x,y)$ the class of morphisms $\gamma\in\Hom(x,y)$ with $\mathbf t(\gamma)=t$.

Given a pre-Lagrangian category $\categ$, we define, for $t\in[0,+\infty)$ and $x,y\in\categ$, the \emph{finite-time action potential} $h_t\colon \categ\times\categ\to\R\cup\{\pm\infty\}$ by
\[h_t(x,y)=\inf_{{\gamma\in\Homt{t}(x,y)}}A(\gamma).\]
Observe that if the set of morphisms $\gamma\in\Hom(x,y)$ with $\mathbf t(\gamma)=t$ is empty, then $h_t(x,y)=+\infty$.

If the finite-time action potential satisfies, for all $s,t\geq 0$, and $x,z\in\categ$,
\begin{equation}\label{eq:lagrangianness}
 h_{s+t}(x,z)=\inf_{y\in\categ}h_s(x,y)+h_t(y,z),
\end{equation}
then we say that $\categ$ is \emph{Lagrangian}. The following result gives sufficient conditions for $\categ$ to be Lagrangian that can be checked easily in many contexts.

\begin{lem}\label{lem:sufflagrangian}
    Assume that the category $\categ$ is pre-Lagrangian and satisfies the following:
    \begin{enumerate}[label=L\arabic*.,ref=L\arabic*]
        \item\label{A:curves} For every pair of objects $x$ and $y$ in $\categ$ and every $t\geq 0$ there is a morphism $\gamma\in\Hom(x,y)$ between them with $\mathbf t(\gamma)=t$. In other words, $\Homt{t}(x,y)$ is not empty.

        \item\label{A:division} For each $\gamma\in\Hom(x,z)$ 
        and for each $t\in[0,\mathbf t(\gamma)]$, there exist an object $y$ and morphisms $\xi\in\Homt{t}(x,y)$ and $\eta\in\Homt{\mathbf t(\gamma)-t}(y,z)$ such that $\gamma=\xi\circ\eta$. 
    \end{enumerate}
    Then $\categ$ is Lagrangian.
\end{lem}
\begin{proof}
 We need to prove that \eqref{eq:lagrangianness} holds for $\categ$.
 Let $x,z\in\categ$ and fix $s,t\geq0$.
 The inequality $h_{s+t}(x,z)\leq \inf_{y\in \categ}h_s(x,y)+h_t(y,z)$ is always true. Let us verify the opposite inequality.  Let $\varepsilon>0$ and pick some $\gamma\in \Homt{s+t}(x,z)$ with %
 $A(\gamma)\leq h_{s+t}(x,z)+\varepsilon$; such $\gamma$ exists because by \ref{A:curves} the class $\Homt{s+t}(x,y)$ is not empty, and $h_{s+t}(x,z)$ is the infimum of the action $A$ on that class. Let $y_0\in\categ$, $\eta\in \Hom^s(x,y_0)$ and $\xi\in\Hom^t(y_0,z)$ be as in \ref{A:division}, so that %
 $\gamma=\xi\circ \eta$. We have
 \begin{multline*}
  \inf_{y\in\categ}h_s(x,y)+h_t(y,z)
  \leq h_s(x,y_0)+h_t(y_0,z)\\
  \leq A(\eta)+A(\xi)=A(\xi\circ\eta)=A(\gamma)\leq h_{s+t}(x,z)+\varepsilon.
 \end{multline*}
 Since we can do this for all $\varepsilon>0$, we conclude that \eqref{eq:lagrangianness} holds.
\end{proof}

Associated to the maps $h_t$, we have a set of maps $\varphi_t$ that operate on functions $u\colon \categ\to\R$ by
\[\varphi_tu(x)=\inf_{\substack{y\in \categ%
}}u(y)+h_t(y,x),\quad t\geq 0.\]
The set $\{\varphi_t:t\geq0\}$ is known as the \emph{Lax-Oleinik semigroup} because of the following result.

\begin{lem}\label{lem:semigroup}
If the category $\categ$ is Lagrangian, then
\[\varphi_s\circ\varphi_t=\varphi_{s+t},\quad s,t\geq 0.\]
\end{lem}
\begin{proof}
We observe that, as a consequence of \eqref{eq:lagrangianness},
\begin{align*}
\varphi_{s+t}u(x)
&=\inf_{y\in \categ}u(y)+h_{s+t}(y,x)\\
&=\inf_{y\in \categ}\left[u(y)+\inf_{z\in \categ} h_{t}(y,z)+h_s(z,x)\right]\\
&=\inf_{y\in \categ}\inf_{z\in \categ}[u(y)+h_{t}(y,z)+h_s(z,x)]\\
&=\inf_{z\in \categ}\inf_{y\in \categ}[u(y)+h_{t}(y,z)+h_s(z,x)]\\
&=\inf_{z\in \categ}\left[\inf_{y\in \categ}[u(y)+h_{t}(y,z)]+h_s(z,x)\right]\\
&=\inf_{z\in \categ} \varphi_{t}u(z)+h_s(z,x)=\varphi_s[\varphi_{t}u](x).\qedhere
\end{align*}
\end{proof}

The central result of a typical weak \kam{} theory, a version of which is presented below in Theorem \ref{thm:mainlemma}, is that, under certain technical assumptions on a Lagrangian category $\categ$, there exists a function $u\colon\categ\to\R$
satisfying, for some $c_0\in\R$ and for all $t\geq 0$,
\begin{equation}\label{eq:fixedpoint}
 u=\varphi_tu+c_0t.
\end{equation}
The significance of this result depends on the context of the specific category $\categ$. The reader will be well-served with  some examples, and we give some in Section \ref{sec:examples}.

\subsubsection{Weak \kamt{} solutions and finely \kamt{}-amenable categories}\label{sec:weakkamsolutions}
Let us give a useful alternative characterization of a function $u$ satisfying \eqref{eq:fixedpoint} that applies to many cases of interest. In particular, it applies to most of the examples of Section \ref{sec:examples}. 

A function $u\colon\categ\to\R$ is a \emph{weak \kam{} solution (of negative type\footnote{The original theory \cite{fathibook} includes the concept of \emph{weak \kam{} solutions of positive type} obtained by appropriately changing the signs of time parameters $s,t$, and it subsequently considers the idea of \emph{conjugate solutions}, which are both very interesting. For simplicity we do not discuss those topics here.})} if the two following conditions are verified:
\begin{itemize}
    \item There is some $c_0\in\R$ such that, for all objects $x$ and $y$ of $\categ$ and every $\gamma\in \Hom(y,x)$ we have
    \begin{equation}\label{eq:dominated}
        u(x)-u(y)\leq A(\gamma)+c_0\mathbf t(\gamma).
    \end{equation}
    
    \item For every object $x$ in $\categ$, there is a subcategory $\Gamma_x$ of $\categ$ such that:
    \begin{itemize}
    \item $x$ is an object of $\Gamma_x$, 
    \item $\Gamma_x$ contains an object $y_t$ for each $t\leq 0$ with $x=y_0$, the class of morphisms $\Hom_{\Gamma_x}(y_s,y_t)$ is nonempty for all $s\leq t\leq 0$, and 
    \item every $\gamma\in \Hom_{\Gamma_x}(y_s,y_t)$ satisfies
        \begin{equation}\label{eq:calibrated}
         \mathbf t(\gamma)=t-s\quad \textrm{and} \quad u(y_t)-u(y_s)=A(\gamma)+c_0(t-s). 
        \end{equation}
    \end{itemize}
\end{itemize}
    Analogously to the classical case, we refer to the subcategory $\Gamma_x$ as a \emph{geodesic} of the Lagrangian category $\categ$.

A category $\categ$ is \emph{finely \kam{}-amenable} if it satisfies the following:
\begin{enumerate}[ref=A\arabic*,label=A\arabic*.]
\item \label{A:lagrangian} The category $\categ$ is Lagrangian.
\item\label{A:realizable} 
(Morphisms realizable as curves) 
There is, associated to each morphism $\gamma\in\Hom(x,y)$,  
a curve $\bar\gamma\colon[0,\mathbf t(\gamma)]\to\categ$ such that $\bar \gamma(0)=x$, $\bar\gamma(\mathbf t(\gamma))=y$.

\item\label{A:composition} (Composition of morphisms is concatenation of curves) The association $\gamma\mapsto\bar\gamma$ is such that, for every two morphisms $\gamma\in \Hom(x,y)$ and $\eta\in\Hom(y,z)$, the curve $\overline{\eta\circ\gamma}$ associated to the composition $\eta\circ\gamma\in\Hom(x,z)$ is the concatenation $\bar\gamma*\bar\eta$ of $\bar\gamma%
$ followed by $\bar\eta%
$, that is,
\[\overline{\eta\circ\gamma}=\bar\gamma*\bar\eta(s)\coloneqq\begin{cases}
                          \bar\gamma(s),&s\in[0,\mathbf t(\gamma)],\\
                          \bar\eta(s-\mathbf t(\gamma)),&s\in[\mathbf t(\gamma),\mathbf t(\gamma)+\mathbf t(\eta)].
                         \end{cases}
\]

\item \label{A:restriction} 
(Restrictions of curves are morphisms)
Given $\gamma\in\Hom(\categ)$ and $0\leq \sigma\leq \tau\leq \mathbf t(\gamma)$, there is $\eta\in\Hom(\bar\gamma(\sigma),\bar\gamma(\tau))$ such that $\bar\eta(t)=\bar\gamma(t+\sigma)$, $t\in[0,\tau-\sigma]$, and $\mathbf t(\eta)=\tau-\sigma$. (We will slightly abuse notations to denote such $\eta$ by $\gamma|_{[\sigma,\tau]}$.)

\item\label{A:minimum} 
(Attainment of a certain minimum)
There is $k_0>0$ such that, every $x\in\categ$ and %
each function $u\colon\categ\to\R$ satisfying \eqref{eq:fixedpoint}, there is $t\geq k_0$ such that the function 
\begin{equation*}%
 (y,\gamma)\mapsto u(y)+A(\gamma)%
\end{equation*}
 attains its minimum in the class of pairs $(y,\gamma)$ with $\gamma\in\Hom(y,x)$ with $\mathbf t(\gamma)\geq k_0$.%
\end{enumerate}
Our main motivation for requiring \ref{A:minimum} is that it provides the central tool to prove Lemma \ref{lem:amenablessolvable}.

\begin{lem}\label{lem:amenablessolvable}
On a finely \kam{}-amenable  category $\categ$, a function $u$ satisfies \eqref{eq:fixedpoint} if, and only if, $u$ is a weak \kam{} solution.
\end{lem}

\begin{proof}
Let us first assume that $u$ satisfies \eqref{eq:fixedpoint}, which we may rewrite as
\[u(x)=\inf_{\substack{y\in\categ\\\gamma\in\Homt{t}(y,x)}}u(y)+A(\gamma)+c_0\mathbf t(\gamma).\] 
This clearly implies that \eqref{eq:dominated} holds.

In order to show that $u$ is a weak \kam{} solution,
let $x$ be an object in $\categ$ and let us show that the subcategory $\Gamma_x$ exists. We shall construct it inductively. Let $x_0=x$ and $\gamma_0=\operatorname{id}_x$. Assume that for some $i\in \N$, $x_i$ and $\gamma_i\in \Hom(x_i,x_{i-1})$ with $\mathbf t(\gamma_i)\geq k_0$  have been chosen. Use \ref{A:minimum} %
to get a minimizing pair of a point $x_{i+1}$ and a curve $\gamma_{i+1}\in\Hom(x_{i+1},x_i)$ such that $\mathbf t(\gamma_{i+1})\geq k_0$ and, since $\gamma_{i+1}$ is the minimizer, from \eqref{eq:dominated} we get
\begin{equation}
    \label{eq:calibration}u(x_{i})=u(x_{i+1})+A(\gamma_{i+1})+c_0\mathbf t(\gamma_{i+1}).
\end{equation}
Denote $\mathbf t_j\coloneqq\mathbf t(\gamma_j\circ\gamma_{j-1}\circ\dots\circ\gamma_1)=\sum_{i=1}^{j}\mathbf t(\gamma_{i})\geq k_0j$.

Let the curve $y\colon(-\infty,0]\to\categ$ be the concatenation of the curves $\bar\gamma_j$, so that its restrictions to $[-\mathbf t_j,0]$, $j>0$, are given by
\begin{align*}y_t=y(t)&=\overline{\gamma_j\circ\gamma_{j-1}\circ\cdots\circ\gamma_1}(\mathbf t_j+t),&&t\in[-\mathbf t_j,0],\\
&=\bar\gamma_k(\mathbf t_k+t),&&t\in[-\mathbf t_k,-\mathbf t_{k-1}].
\end{align*}
To define the category $\Gamma_x$, we first let the objects be 
\[\Gamma_x=\{y_t:t\leq 0\}=\bigcup_{i=1}^\infty \bar\gamma_i([0,\mathbf t(\gamma_i)]).\] 
Let $\Hom(\Gamma_x)$ be the set that contains all elements $\eta\in\Hom(\categ)$ such that there are some $j>0$ and some interval $I\subset[0,\mathbf t_j]$ with $\eta=\gamma_j\circ\dots\circ\gamma_1|_I$, which exists by \ref{A:restriction}.
For $s\leq 0$, observe that $y_s\in\Gamma_x$ is the point such that the single element $\gamma\in\Hom(y_s,x)\cap\Hom(\Gamma_x)$ satisfies $s=-\mathbf t(\gamma)$. It follows in particular that $y_{-\mathbf t_j}=x_j$ for $j\in\N$.

Let us show that \eqref{eq:calibrated} holds in $\Gamma_x$.
First, note that for each $i\in\mathbb N$ we have, by $i$ consecutive applications of \eqref{eq:calibration},
\begin{multline*}
    u(x_i)=u(x_{i-1})-A(\gamma_i)-c_0\mathbf t(\gamma_i)\\
    =u(x_{i-2})-A(\gamma_i\circ\gamma_{i-1})-c_0\mathbf t(\gamma_i+\gamma_{i-1})\\
    =\dots=
    u(x)-A(\gamma_i\circ\gamma_{i-1}\circ\dots\circ\gamma_1)-c_0\mathbf t_i;
\end{multline*}
in other words,
\[u(x)-u(x_i)=A(\gamma_i\circ\dots\circ\gamma_1)+c_0\mathbf t_i.\]
Let $s\leq t\leq 0$, and take $j$ such that $\mathbf t_j>-s$. Denote $\gamma=\gamma_j\circ\dots\circ\gamma_1$, and for a closed interval $I$, let $\gamma|_I$ be as in \ref{A:restriction}; in particular, we have that $\gamma|_{[-\mathbf t_j,0]}=\gamma$. Observe that
\begin{align*}
    u(x)-u(y_t)
    &=u(x)-u(x_j)+u(x_j)-u(y_t)\\
    &=A(\gamma)+c_0\mathbf t_j+u(x_j)-u(y_t)\\
    &\geq A(\gamma)+c_0\mathbf t_j-A(\gamma|_{[-\mathbf t_j,t]})-c_0(t+\mathbf t_j)\\
    &=A(\gamma|_{[t,0]})-c_0t,
\end{align*}
and similarly replacing $t$ by $s$.
It follows that
\begin{align*}
    u(y_t)-u(y_s)
    &=u(x)-A(\gamma|_{[t,0]})+c_0t-u(x)+A(\gamma|_{[s,0]})-c_0s\\
    &=A(\gamma|_{[s,t]})+c_0(t-s),
\end{align*}
which is precisely \eqref{eq:calibrated}.

For the converse, assume that $u$ is a weak \kam{} solution. From \eqref{eq:dominated} it follows that $u\leq \varphi_tu+c_0t$ for all $t>0$. To prove the opposite inequality, let now $t>0$ and $x$ be an object in $\categ$. Let $y_{-t}$ be the object in $\Gamma_x$ and $\gamma_{-t,0}$ the morphism in $\Hom_{\Gamma_x}(y_{-t},x)$ verifying \eqref{eq:calibrated}, that is,
\[u(x)-u(y_{-t})=A(\gamma_{-t,0})-c_0(0-t).\]
Then we have
\begin{align*}
 \varphi_tu(x)+c_0t&=\inf_{y\in \categ}u(y)+h_t(y,x)+c_0t\\
 &\leq u(y_{-t})+h_t(y_{-t},x)+c_0t\\
 &\leq u(y_{-t})+A(\gamma_{-t,0})-c_0(0-t)=u(x).\qedhere
\end{align*}
\end{proof}

\subsection{Weak \kamt{} machinery for noncompact metric spaces}
\label{sec:machinery}
The purpose of this section is to present and prove Theorem \ref{thm:mainlemma},
a result
that powers a rich range of weak \textsc{kam} theories. We will use the definitions and notations from Section \ref{sec:abstractsetting}.

Recall that a topological space is $\sigma$-compact if it can be covered by countably-many compact sets.

\begin{thm}\label{thm:mainlemma}
 Let $\categ$ be a Lagrangian category with action $A$ and time function $\mathbf t$. Assume that the class of objects of $\categ$ is a set that has the structure of a $\sigma$-compact metric space with distance function $\dist$, and that the finite action potential $h$ satisfies the technical hypotheses that follow:
 \begin{enumerate}[label=K\arabic*.,ref=K\arabic*]
  \item\label{it:lipschitzity} %
  There is some constant $P>0$ such that 
  \[h_{\dist(x,y)}(x,y)\leq P\dist(x,y)\]
  for all objects $x,y\in \categ$.
  In particular, $\Homt{\dist(x,y)}(x,y)$ is not empty.
  \item \label{it:superlinearity} 
  For every $k_1>0$ there is $k_2>0$ such that, 
  for all $x,y\in\categ$ and all $t>0$,
  \[k_1\dist(x,y)-k_2t\leq h_t(x,y).\]
  \item \label{it:sufficientlymanyshortcurves}
   There is some $Q>0$ such that, for all $x\in \categ$ and all $t>0$, we have
   \[h_t(x,x)\leq Qt.\]
 \end{enumerate}
 Then there are a locally Lipschitz function $u\colon \categ\to\R$ and a number $c_0\in\R$ such that, for all $t>0$, 
 \[u=\varphi_tu+c_0t.\]
 
\end{thm}
\begin{rmk}\label{rmk:weakkamsolution}
 By Lemma \ref{lem:amenablessolvable}, on a finely \kam{}-amenable category the conclusion of Theorem \ref{thm:mainlemma} is equivalent to $u$ being a weak \kam{} solution, as defined in Section \ref{sec:weakkamsolutions}.
\end{rmk}

Theorem \ref{thm:mainlemma} is %
a categorical formulation of a very slight generalization of the statement proved in \cite{madernafathi}. 
To prove it, we need a definition and an auxiliary lemma. 

Let $u\in C^0(\categ)$. For $c\in\R$, we say that $u$ is \emph{$c$-dominated} if, for every $t>0$,%
\[u(y)-u(x)\leq h_t(x,y)+ct.\]
We denote $\mathcal H(c)$ the set of $c$-dominated functions.

\begin{lem}\label{lem:auxlemmaLO}
Under the assumptions of Theorem \ref{thm:mainlemma} and with its notations, we have:
\begin{enumerate}[label=\roman*.,ref=(\roman*)]
 \item \label{it:lipschitzityofHc} The functions $u\in \mathcal H(c)$ are %
 uniformly Lipschitz with constant $P+c$.
 \item \label{it:nonemptyHc} There is $c_0>0$ such that the set $\mathcal{H}(c)$ is nonempty for all $c\geq c_0$.
 \item \label{it:phiinvolutionHc} $\varphi_s(\mathcal H(c))\subseteq \mathcal H(c)$.
 \item \label{it:continuity} $(t,u)\mapsto \varphi_t(u)$ is a continuous map on $[0,+\infty)\times\mathcal H(c)$, where %
 $\mathcal H(c)$ is given the topology of uniform convergence on compact sets.
\end{enumerate}
\end{lem}
\begin{proof}[Proof of Lemma \ref{lem:auxlemmaLO}]

The first assertion follows immediately from assumption \ref{it:lipschitzity}: 
if $u\in \mathcal H(c)$ and $x,y\in\categ$, and taking $t=\dist(x,y)$, we have
\[u(x)-u(y)\leq h_t(y,x)+ct\leq (P+c)\dist(x,y).\]
The other inequality can be obtained exchanging $x$ and $y$.

To see that the second assertion follows from \ref{it:superlinearity}, notice that if $u\colon\categ\to\R$ is Lipschitzian with Lipschitz constant $\leq k_1$, then there is $k_2>0$ such that, for all $t>0$, $x,y\in\categ$,
\[u(y)-u(x)\leq k_1\dist(x,y)\leq h_t(x,y)+k_2t,\]
whence $u\in\mathcal H(c)$ for all $c\geq k_2$. Thus to get one possible value for $c_0$, use $k_1=1$ and let $c_0=k_2$.

For the third assertion, observe first that $u\in \mathcal H(c)$ if, and only if, for all $x,y\in\categ$ and $t>0$,
\[u(x)\leq u(y)+h_t(y,x)+ct\]
which is true
if, and only if, (taking the infimum on the right hand side)
\begin{equation}\label{eq:characterizationofdomination}
u(x)\leq \varphi_tu(x)+ct.
\end{equation}
Moreover, since by the definition of $\varphi_t$ we clearly have that if $u\leq v$ then $\varphi_tu\leq\varphi_tv$, we see that  applying $\varphi_t$ to both sides of \eqref{eq:characterizationofdomination} and using that $\varphi_t(u+k)=\varphi_tu+k$ for $k\in\R$, we get 
\[\varphi_tu(x)\leq\varphi_{t}\varphi_t u(x)+ct,\]
whence $\varphi_tu\in\mathcal H(c)$ again.

To prove the last item, we will proceed in a way analogous to \cite[Proposition 3.3(3)]{madernafathi}. We will do this in three steps.

\vspace{2mm}
\emph{Step 1.} 
We will show that there is $\bar C(c)>0$ such that, for all $u\in\mathcal H(c)$,  
\[\varphi_tu(x)=\inf\{u(y)+h_t(y,x):y\in\categ,\;\dist(x,y)\leq \magicK(c)t\}.\]

To see why this is true, note that by \ref{it:sufficientlymanyshortcurves}, taking $x=y$ in the definition of $\varphi_tu$, we have
\begin{equation}\label{eq:whatwesaw}
 \varphi_tu(x)\leq u(x)+h_t(x,x)\leq u(x)+Qt.
\end{equation}

Thus, we have
\[\varphi_tu(x)=\inf\{u(y)+h_t(y,x):y\in\categ,\;u(y)+h_t(y,x)\leq u(x)+Qt\}.\]
Since $u\in\mathcal H(c)$, by item \ref{it:lipschitzityofHc} the latter condition boils down to
\[h_t(y,x)\leq u(x)-u(y)+Qt\leq (P+c)\dist(x,y)+Qt.\]
On the other hand, by \ref{it:superlinearity} we have that, if we let $k_1>P+c$, then there is $k_2>0$ such that
\[k_1\dist(x,y)-k_2t\leq h_t(y,x).\]
Putting these two together we get
\[(k_1-P-c)\dist(x,y)\leq (k_2+Q)t.\]
Thus, since $k_1\geq P+c$, so we can divide by $k_1-P-c$. This shows the claim above with     $\magicK(c)=\frac{k_2+Q}{k_1-P-c}$.

\vspace{2mm}
\emph{Step 2.} 
We show that, for fixed $t\geq 0$, $\varphi_t\colon\mathcal H(c)\to\mathcal H(c)$ is continuous.

Let $u,v\in\mathcal H(c)$. Let $x,y\in \categ $ be two points with $\dist(x,y)\leq \magicK(c)t$. Then we have
\begin{align*}
 \varphi_tv(x)&\leq v(y)+h_t(y,x)\\
 &\leq u(y)+h_t(y,x)+|u(y)-v(y)|\\
 &\leq u(y)+h_t(y,x)+\sup \{|u(w)-v(w)|:\dist(x,w)\leq \magicK(c)t\}.
\end{align*}
Take the infimum over $y\in\categ$ to get
\[\varphi_tv(x)\leq \varphi_tu(x)+
 \sup \{|u(w)-v(w)|:\dist(x,w)\leq \magicK(c)t\}.
 \]
Exchanging $u$ and $v$, get
\[|\varphi_tv(x)-\varphi_tu(x)|\leq \sup \{|u(w)-v(w)|:\dist(x,w)\leq \magicK(c)t\}.\]
Take the supremum letting $x$ run over a compact set $A\subset \categ$ to get
\begin{align*}
 \sup_A|\varphi_tv-\varphi_tu|&\leq \sup\{|u(w)-v(w)|:\dist(w,A)\leq \magicK(c)t\}\\
 &\leq \sup\{|u(w)-v(w)|(1+2(P+c)\magicK(c)t):w\in A\}
\end{align*}
since $u-v$ must be Lipschitzian with Lipschitz constant at most equal to the sum of the Lipschitz constants of $u$ and $v$, which are both bounded by $P+c$ by item \ref{it:lipschitzityofHc}. 
We can rewrite this as
\[\sup_A|\varphi_tv-\varphi_tu|\leq %
(1+2(P+c)\magicK(c)t)
\sup_A|u-v|.\]
This shows the continuity of $\varphi_t$ in the compact-open topology of $\mathcal H(c)$.

\vspace{2mm}
\emph{Step 3.}
We show the Lispchitz continuity of $t\mapsto \varphi_tu$.

It suffices to show that there is some $C>0$ such that $\|\varphi_su-\varphi_tu\|_\infty\leq C|s-t|$. By the semigroup property and item \ref{it:phiinvolutionHc}, we just need $\|u-\varphi_tu\|\leq Ct$ for $t>0$. Now, since $u\in\mathcal H(c)$, we have $u(x)\leq u(y)+h_t(y,x)+ct$. Taking the infimum over all $y$, this gives $u\leq \varphi_tu+ct$. We saw above \eqref{eq:whatwesaw} that $\varphi_tu\leq u+Qt$. Thus letting $C=\max(Q,c)$, we have what we need.

To finish the proof of \ref{it:continuity}, combine Steps 2 and 3 to get the continuity of $(t,u)\mapsto\varphi_t(u)$ on the required set: with $A$ as above,
\begin{align*}
    \sup_A|\varphi_tu-\varphi_sv|&\leq \sup_A|\varphi_tu-\varphi_su|+\sup_A|\varphi_su-\varphi_sv|\\
    &\leq C|t-s|+ (1+2(P+c)\bar C(c)s)\sup_A|u-v|.\qedhere
\end{align*}
\end{proof}

\begin{proof}[Proof of Theorem \ref{thm:mainlemma}.] The proof is essentially the same as that in \cite[Section 4]{madernafathi}.

We denote by $\widehat C^0(\categ)$ the quotient of the vector space $C^0(\categ)$ by its subspace of constant functions. If $\widehat q\colon C^0(\categ)\to \widehat C^0(\categ)$ is the quotient map, then since $\varphi_s(u+k)=k+\varphi_s(u)$ for $k\in\R$, the semigroup  $\varphi_s$ (cf.  Lemma \ref{lem:semigroup}) induces a semigroup on $\widehat C^0(\categ)$ that we denote $\widehat \varphi_s$ and satisfies $\widehat\varphi_s\circ\widehat q=\widehat q\circ\varphi_s$ for all $s>0$ as well as $\widehat \varphi_{s+t}=\widehat\varphi_s\circ\widehat\varphi_t$.

The topology on $\widehat C^0(\categ)$ is the quotient of the compact open topology (i.e., the topology of uniform convergence on compact sets). With this topology, the space $\widehat C^0(\categ)$ becomes a locally convex topological vector space. 

Denote by $\widehat {\mathcal H}(c)$ the image $\widehat q(\mathcal H(c))$ in $\widehat C^0(\categ)$. The subset $\widehat {\mathcal H}(c)$ of $\widehat C^0(\categ)$ is convex and compact. The convexity of $\widehat {\mathcal H}(c)$ follows from that of $\mathcal H(c)$. To prove that $\widehat{\mathcal H}(c)$  is compact, we introduce $C^0_x(\categ)$ the set of continuous functions $\categ\to\R$ vanishing at some fixed $x\in \categ$. The map $\widehat q$ induces a homeomorphism from $C^0_x(\categ)$ onto $\widehat C^0(\categ)$. Since $\mathcal H(c)$ is stable under addition of constants, its image $\widehat{\mathcal H}(c)$ is also the image under $\widehat q$ of the intersection $\mathcal H_x(c)=\mathcal H(c)\cap C^0_x(\categ)$. The subset $\mathcal H_x(c)$ is closed in $C^0(\categ)$ for the compact-open topology. Moreover, it consists of functions that all vanish at $x$ and are locally uniformly Lipschitz (because of Lemma \ref{lem:auxlemmaLO}\ref{it:lipschitzityofHc}).

Let us show that $\mathcal H_x(c)$ is compact.
Pick compact sets $K_1\subset K_2\subset\cdots\subset \categ$ such that $x\in K_i$ for all $i$ and $\categ=\bigcup_iK_i$, and let $\{f_i\}_i\subset\mathcal H_x(c)$ be a sequence of functions. 
From the Lipschitz version of the Arzel\`a-Ascoli theorem, it follows that there is a subsequence $\{f_{i^1_j}\}_{j}$ convergent in $K_1$. We iteratively apply the same theorem to pass to subsequent subsequences $\{f_{i^m_j}\}_{j}\subset \{f_{i^{m-1}_{j}}\}_{j}$ that converge on $K_m$ for each $m\in\N$. Taking the diagonal sequence $\{f_{i^j_j}\}_j$ we ensure convergence throughout $\categ$.
This means that $\mathcal H_x(c)$ is sequentially compact. Since the space $C^0(\categ)$ is metrizable by 
\[\dist(f,g)=\sum_{i}\frac1{2^{i}}\frac{\sup_{K_i}\|f-g\|}{1+\sup_{K_i}\|f-g\|},\] 
$\mathcal H_x(c)$ is also compact.

The restriction of $\widehat q$ to $\mathcal H_x(c)$ induces a homeomorphism onto $\widehat{\mathcal H}(c)$.
As a first consequence we conclude that if 
\[c_0=\inf\{c\in \R|\mathcal H(c)\neq \emptyset\}\]
then $\bigcap_{c>c_0}\widehat{\mathcal H}(c)\neq \emptyset$ as the intersection of a decreasing family of compact nonempty subsets (cf. Lemma \ref{lem:auxlemmaLO}\ref{it:nonemptyHc}). It follows that $\mathcal H(c_0)$ is also nonempty because it contains the nonempty subset $\widehat q^{-1} \left[\bigcap_{c>c_0}\widehat{\mathcal H}(c)\right]$.

We have that $\widehat\varphi_s(\widehat q(u))=\widehat q[\varphi_su-\varphi_su(x)]$ for $u\in\mathcal H_x(c)$ because $\varphi_su(x)$ is a constant so $\widehat q$ maps it to 0.
Since, by Lemma \ref{lem:auxlemmaLO}\ref{it:continuity}, the map
\begin{align*}
[0,+\infty)\times\mathcal H_x(c)&\to\mathcal H_x(c)\\
(s,u)&\mapsto \varphi_s(u)-\varphi_s(u)(x)
\end{align*}
is continuous, we conclude that $\widehat\varphi_t$ induces a continuous semigroup of $\widehat{\mathcal H}(c)$ into itself (cf. Lemma \ref{lem:auxlemmaLO}\ref{it:phiinvolutionHc}). Since this last subset is a nonempty convex compact subset of a locally convex topological vector space $\widehat C^0(\categ)$, we can apply the Schauder-Tychonoff theorem \cite[pages 414--415]{dugundji} to conclude that, for each $t>0$, $\widehat \varphi_t$ has a fixed point $\widehat u_t$ in $\widehat{\mathcal H}(c)$ if $\mathcal H(c)\neq \emptyset$, that is, for every $c\geq c_0$. Observing that for every rational $\frac pq\in\Q$ we have 
\[\widehat u_{\frac1q}=\underbrace{\widehat \varphi_{\frac1q}\circ \widehat \varphi_{\frac1q}\circ \dots\circ \widehat \varphi_{\frac1q}}_{p}(\widehat u_{\frac1q})=\widehat\varphi_{\frac pq}(\widehat u_{\frac1q})\]
so that $\widehat u_{\frac1q}=\widehat u_{\frac pq}$.
Using a density argument, we conclude that there is in fact a fixed point $\widehat u\in\widehat H(c)$ common to all the maps $\widehat\varphi_t$, $t>0$.

If we let $u\in \mathcal H(c)$ be such that $\widehat q (u)=\widehat u$, then the fact that $\widehat u$ is a fixed point for $\widehat \varphi_s$ means that  $\varphi_tu=u+a(t)$. Using that $\varphi_t$ is a semigroup, we get that $a(t)=a(1)t$. The equality $u=\varphi_tu-a(1)t$ shows that for all $x,y\in \categ$
\[u(y)-u(x)\leq h_t(x,y)-a(1)t.\]
Hence, from our choice of $c_0$, $-a(1)\geq c_0$. Since $u\in \mathcal H(c_0)$, we must have $u\leq \varphi_tu+c_0t$, which gives $\varphi_tu-a(1)t\leq \varphi_tu+c_0t$ for all $t\geq 0$, and $-a(1)\leq c_0$. We conclude that $-a(1)=c_0$.
\end{proof}
 
\subsection{Examples}
\label{sec:examples}

\subsubsection{Weak \kamt{} for Lagrangian dynamics.}\label{sec:lagrangian}
Our first example concerns the setting originally treated by Fathi--Siconolfi \cite{fathibook}. We take a compact Riemannian manifold $(M,g)$ and we let the objects of the category $\categ_{\mathrm{FS}}$ be the points of the manifold $M$, whose distance function is the one induced by the Riemannian metric $g$.  The morphisms $\Hom(x,y)$ of $\categ$ are simply the sets of absolutely continuous curves $\gamma\colon[0,T]\to M$ defined on an interval $[0,T]$ of arbitrary length, and with $\gamma(0)=x$ and $\gamma(T)=y$. Their composition $\gamma\circ\eta$ is defined  to be the concatenation $\gamma*\eta$, that is, the curve traversing $\gamma$ and then $\eta$.
The tangent bundle $TM$ is endowed with a Lagrangian function $L\colon TM\to\R$ that is smooth and satisfies the Tonelli conditions of strict convexity (the Hessian of $L(x,v)$ with respect to the velocity variables $v$ is everywhere positive definite) and superlinearity on the fibers of $TM$, which is the requirement that for every $A>0$ there be $B>0$ such that, for all $(x,v)\in TM$,
\[L(x,v)\geq A\|v\|-B.\]
The action $A\colon\Hom(\categ_{\mathrm{FS}})\to\R$ is defined by 
\[A(\gamma)=\int_0^TL(\gamma(t),\gamma'(t))\,dt\]
for a curve $\gamma\colon[0,T]\to M$, and the time function $\mathbf{t}\colon\Hom(\categ_{\mathrm{FS}})\to[0,+\infty)$ is given by $\mathbf t(\gamma)=T$. 

Since category $\categ_{\mathrm{FS}}$ satisfies  \ref{A:curves} and \ref{A:division}, it is Lagrangian by Lemma \ref{lem:sufflagrangian}. It is also finely \kam{} amenable. Indeed,  \ref{A:lagrangian}--\ref{A:restriction} are clear. To show that \ref{A:minimum} is also true, obeserve that by the superlinearlity of $L$, the curves $\gamma$ with $\gamma(0)=x$ and $A(\gamma|_{[-t,0]})\leq h_t(\gamma(-t),x)+1$ are uniformly Lipschitz, so they are compact by Arzel\`a-Ascoli. It is shown in \cite{fathibook} that $A$ is lower semicontinuous. Whence the minimum in \ref{A:minimum} is attained.

Theorem \ref{thm:mainlemma} can be applied because $\categ_{\mathrm{FS}}$ is finely \kam{} amenable and satisfies the hypotheses of the theorem, as we shall now explain. To see that it verifies \ref{it:lipschitzity}, take a curve $\gamma\in \Hom(x,y)$ with $\|\gamma'(t)\|=1$, so that the arclength of $\gamma$ is $\dist(x,y)=\mathbf t(\gamma)$ and $A(\gamma)\leq P\dist(x,y)$ with
\[P=\sup_{\substack{(x,v)\in TM\\\|v\|\leq 1}}L(x,v).\]
Hypothesis \ref{it:superlinearity} follows immediately from the superlinearity of $L$. Finally, \ref{it:sufficientlymanyshortcurves} is shown to be true by taking, for $x\in M$, the constant curve $\gamma(s)=x$, $s\in[0,t]$, which satisfies $A(\gamma)=L(x,0)t$, so we can take $Q=\max_{x\in M}L(x,0)$.

The function $u$ rendered by Theorem \ref{thm:mainlemma} can then be interpreted as a function on $M$, and since
\begin{align*}
 u(y)&=\varphi_Tu(y)+c_0T\\
 &=\inf_{x\in \categ_{\mathrm{FS}}}u(x)+h_T(x,y)+c_0T\\
 &=\inf_{x\in \categ_{\mathrm{FS}}}u(x)+\inf_{{\gamma\in\Homt{T}(x,y)}}A(\gamma)+c_0T\\
 &=\inf_{x\in M}u(x)+\inf_{\substack{\gamma\colon[0,T]\to M\\\gamma(0)=x,\gamma(T)=y}}\int_0^TL(\gamma(t),\gamma'(t))\,dt+c_0T,
\end{align*}
we have 
\[u(y)-u(x)\leq\int_0^TL(\gamma(t),\gamma'(t))\,dt+c_0T\]
for all $\gamma\in \Homt{T}(x,y)$. %
Denoting by $du$ the differential of $u$, it follows that 
\begin{equation}\label{eq:HJ}
 du_xv\leq L(x,v)+c_0
\end{equation}
for  any point $x\in M$ where $u$ is differentiable, and $v\in T_xM$. If we let $H$ be the Hamiltonian function associated to $L$ by the Legendre--Fenchel transform,
\[H(x,p)=\sup_{v\in TM}p(v)-L(x,v),\]
then \eqref{eq:HJ} can be written 
\[H(du)\leq c_0,\]
which is to say that $u$ is a subsolution of the Hamilton--Jacobi equation.
As the category $\categ_{\mathrm{FS}}$ is finely \kam{}-amenable, as a consequence of \eqref{eq:calibrated} the Fathi--Siconolfi \cite{fathibook} weak \kam{} theory actually gives us a viscosity solution $u$ of the Hamilton--Jacobi equation, $H(du)=c_0$. Here, ``viscosity'' means that, while $u$ is not a classical solution of this partial differential equation, it does enjoy certain desirable regularity properties; see \cite{fathibook,crandallishiilions}.

Many properties of the weak \kam{} solution $u$ and the curves realizing \eqref{eq:calibrated} have been studied in this context; see \cite{fathibook}.
Among many other generalizations, the case in which $M$ is $\sigma$-compact instead of compact was treated in \cite{madernafathi}.

\subsubsection{Weak \kamt{} for optimal control problems.}\label{sec:optimalcontrol}
Other, more general, versions of weak \kam{} arise in the contexts of various optimal control problems; see for example \cite{agrachev2010continuity}. For instance, if we consider the case of a category $\categ_{\mathrm{OC}}$ whose class of objects is given by the points of a $\sigma$-compact manifold $M$, and we also have a set $\mathcal U$ of controls and a function $f\colon M\times\mathcal U\to TM$ associating a velocity $f(x,u)\in T_xM$ to each pair $(x,u)\in M\times \mathcal U$. We also assume that we have a continuous Lagrangian density $L\colon M\times\mathcal U\to\R$ satisfying some technical conditions.

The morphisms $\gamma\in \Hom(x,y)$ are curves $\gamma\colon[0,T]\to M\times\mathcal U$ such that, if $\pi_M\colon M\times \mathcal U\to M$ is the projection, $\pi_M\circ\gamma$ is absolutely continuous and $(\pi_M\circ\gamma)'(t)=f(\gamma(t))$ for almost every $t\in [0,T]$, and $L\circ\gamma$ is integrable. For such $\gamma$, we define $A(\gamma)=\int_0^T L(\gamma(t))\,dt$  and we set $\mathbf t(\gamma)=T$. 

Then, in a manner similar to our explanation of Section \ref{sec:lagrangian}, the function obtained by the weak \kam{} machinery can be identified with the value function and is a viscosity solution of the Hamilton--Jacobi--Bellman equation. See for example \cite{akiandavidgaubert} and the references therein.

Since the category $\categ_{\mathrm{OC}}$ satisfies \ref{A:curves} and \ref{A:division}, by Lemma \ref{lem:sufflagrangian}, it is Lagrangian.
It is finitely \kam{} amenable for example if both $M$ and $f(M\times \mathcal U)$ are compact, from where \ref{A:minimum} follows, 
 and \ref{A:lagrangian}--\ref{A:restriction} are clear. 

\subsubsection{Weak \kamt{} for mass transportation.} \label{sec:masstransport}
This context has been explored for example in \cite{bernardbuffoni}, whose conclusions we now try to paraphrase.
We remark that the following description is not very detailed as the resulting theory is essentially the $n=1$ case of the one developed in Section \ref{sec:skeleton}.

Let $M$ be a compact, connected manifold, and let the objects of $\categ_{\mathrm{MT}}$ be the set of compactly supported Radon probability measures on $M$. We define, for any two $\nu_1,\nu_2\in\categ_{\mathrm{MT}}$, $\Hom(\nu_1,\nu_2)$ to be the set of measures $\mu$ on $M\times M$ with marginals $\pi^1_*\mu=\nu_1$ and $\pi^2_*\mu=\nu_2$, where $\pi^1$ and $\pi^2$ are the projections of $M\times M$ onto the corresponding copies of $M$. In this context we have a cost function $c\colon M\times M \to \R$ that allows us to define the action $A(\mu)=\int_{M\times M} c(x,y)\,d\mu(x,y)$, and if we have a Riemannian metric on $M$ inducing the distance, we can define the time function $\mathbf t(\mu)=\int_{M\times M}\dist(x,y)\,d\mu(x,y)$. The function $u$ given by the weak \kam{} theorem satisfying \eqref{eq:fixedpoint} can be better interpreted in an equivalent context.

With some technical assumptions, this context can be shown to be equivalent to the following \cite{bernardbuffoni}: Let $\tilde{\mathcal C}_{\mathrm{MT}}$ be the category with the same objects as $\categ_{\mathrm{MT}}$, but now we define, for compactly supported probability measures $\nu_1$ and $\nu_2$ on $M$, $\Hom(\nu_1,\nu_2)$ to be the set of compactly supported measures $\mu$ on $TM$ such that, for all $f\in C^\infty(M)$, 
\[\int_{TM}df_x\,d\mu=\int_Mf\,d\nu_1-\int_Mf\,d\nu_2.\]
Assume that $A$ can be written as $A(\mu)=\int_{TM}L\,d\mu$ for some function $L\colon TM\to\R$ and $\mathbf t(\mu)=\int_{TM}\|v\|\,d\mu(v)$. This would be for example the case if $c(x,y)=\dist(x,y)$ on a Riemannian manifold $(M,g)$, in which case $L(x,v)= {g_x(v,v)}$ is the square of the norm induced by the Riemannian metric $g$ of the corresponding tangent vector $v$.  In general, technical assumptions of convexity and superlinear growth on the function $L$ are necessary for the theory to hold. The function $u$ given by the weak \kam{} theorem (which is a function on the space of probabilities on $M$) turns out to correspond to a function on $M$ that is a viscosity solution of the Hamilton--Jacobi equation for the Hamiltonian associated to $L$; this basically means that the optimal transport trajectories for the probabilities are precisely the geodesics of the Lagrangian system associated to $L$, and the evolution of the probabilities, as they are transported, can be understood as a flow along these geodesics.

\subsubsection{Other examples}\label{sec:slicesintro}

Another vein that has been developed extensively and we will not attempt to fully describe here is the discrete setting and on graphs; see for example \cite{zavidovique2012strict, siconolfi2017global}.

In the theory we want to develop in this paper, we will consider submanifolds of a manifold $M$, and $\categ_{\mathrm{HM}}$ and $M$ will not coincide. Roughly speaking, the role of $\categ_{\mathrm{HM}}$ will be played by the set of $(n-1)$-dimensonal slices without boundary of $n$-dimensional submanifolds of $M$. %
The details of this approach will be developed in Section \ref{sec:slices} and the significance of the function $u$ will be explored in Section \ref{sec:coarsechar}.
 
\section{Weak \kamt{} for normal currents and holonomic measures}%
\label{sec:slices}
Let $M$ be a connected $C^\infty$ manifold of dimension $d>0$ without boundary, and let $g$ be a Riemannian metric on $M$. Denote by $\|\cdot\|$ and by $\dist_M(\cdot,\cdot)$, respectively, the norm on $T_xM$ and distance on $M$ induced by $g$. 

For $0<n\leq d$ we let $T^nM$ be the Whitney sum of $n$ copies of the tangent bundle $TM$, so that the fiber
\[T^n_xM=\underbrace{T_xM\oplus T_xM\oplus\dots\oplus T_xM}_n\]
is a vector space of dimension $nd$. %
We will denote a point in $T^nM$ by $(x,v_1,\dots,v_n)$ where $x\in M$ and $v_i\in T_xM$.

For vectors $v_1,\dots,v_n\in T_xM$, denote by $v_1\wedge\dots\wedge v_n$ their antisymmetric product. Given local coordinates $x_1,\dots,x_n$ on an open set of $M$ containing a point $x$, the vectors $\partial/\partial x_1,\dots,\partial/\partial x_n$ form a basis of $T_xM$ and the covectors $dx_1,\dots,dx_n$, satisfying 
\[dx_i(\partial /\partial x_j)=\begin{cases} 0,& i\neq j,\\1,&i=j,\end{cases}\]
form a basis of $T^*_xM$. 

For $0\leq k\leq d$, let $\Omega^k(M)$ be the set of $C^\infty$ differential forms of order $k$ on $M$. In particular $\Omega^0(M)=C^\infty(M)$. A form $\omega\in \Omega^k(M)$ can be written, locally on an open set $U\subseteq M$ diffeomorphic to a ball, as 
\[\omega(x,v_1,\dots,v_k)=\sum_{I}g_I(x)dx_I(v_1,\dots,v_k),\]
where the sum is taken over all subsets $I\subset\{1,2,\dots,d\}$ of cardinality $k$, $g_I\in C^\infty(U)$ and, for $I=\{i_1,\dots,i_k\}$,
\[dx_I(v_1,\dots,v_k)=dx_{i_1}\wedge dx_{i_2}\wedge \dots\wedge dx_{i_k}(v_1,\dots,v_k)=\det(v_{j,i_\ell})_{j,\ell=1}^k,\]
The topology of $\Omega^k(M)$ is induced by the seminorms 
\[|\omega|_{U,m}=\sum_{|J|\leq m}\sup_{x\in U}|\partial^J(g_I(x))_I|\]
where $U$ is a precompact subset of $M$, $m>0$, and the sum is taken over all multiindices $J=(j_1,\dots,j_d)$ with $|J|=\sum_ij_i\leq m$. Observe that a differential form of order $k$ is a function on $T^kM$. 

 A differential form $\omega$ of order $k$ is locally Lipschitz if it can be written locally on compact charts $U\subseteq M$ as $\omega=\sum_{I}g_Idx_I$ with $g_I\colon U\to\R$ Lipschitz continuous, local coordinates $x_1,\dots,x_n$ on $U$ and $dx_I=dx_{i_1}\wedge\dots\wedge dx_{i_k}$, $I=\{i_1<\dots<i_k\}\subset\{1,\dots,d\}$.

A \emph{current} $T$ of dimension $k$ is a continuous functional $T\colon \Omega^k(M)\to\R$. The \emph{boundary} $\partial T$ of a $k$-dimensional current $T$ is a $(k-1)$-dimensional current defined by 
\[\partial T(\omega)=T(d\omega),\quad \omega\in\Omega^{k-1}(M).\]
Since $dd\omega=0$ for all $\omega\in \Omega^k(M)$, we also have $\partial\partial T=0$ for all currents $T$.
A compactly-supported Radon measure $\mu$ on $T^nM$ induces a current $T_\mu$ by
\begin{equation}\label{eq:Tmu}
 T_\mu(\omega)=\int_{T^nM}\omega\,d\mu=\int_{T^nM}\omega_x(v_1,\dots,v_n)\,d\mu(x,v_1,\dots,v_n),\quad\omega\in \Omega^n(M).
\end{equation}
A current $T$ that can be represented as $T=T_\mu$ for some compactly-supported Radon measure $\mu$ is said to be \emph{normal}.

\subsection{Weak KAM theory for normal currents without boundary}\label{sec:skeleton}
We will first discuss a stripped-down theory that essentially deals with bare cobordisms, and in the next subsection we will explain how this can be enriched with a set of objects, namely, the argute slices, that make it more evident how the slices fit together and allow for the construction of a finely \kam{} amenable category.

\begin{defn}\label{def:CHM}Let $\categ_{\mathrm{HM}}$ be the category whose objects are the normal currents $T$ of dimension $n-1$ with null boundary $\partial T=0$. This means that $T$ should be induced by a finite, nonnegative, compactly-supported Radon measure $\nu$ on $T^{n-1}M$, that is, $T=T_\nu$ as defined by \eqref{eq:Tmu}. The morphisms of $\categ_{\mathrm{HM}}$ are, for each pair of currents $T_1$ and $T_2$, the collections $\Hom(T_1,T_2)$ of compactly-supported, nonnegative, Radon measures $\mu$ on $T^nM$ that satisfy
\[\int_{T^nM}d\omega\,d\mu=T_2(\omega)-T_1(\omega)\quad\textrm{for all $\omega\in\Omega^{n-1}(M)$}.\]
In other words, $\partial T_\mu=T_2-T_1$. Composition of $\mu_1\in \Hom(T_1,T_2)$ and $\mu_2\in\Hom(T_2,T_3)$ is defined by 
\[\mu_2\circ\mu_1=\mu_1+\mu_2.\]
We will refer to the objects of $\categ_{\mathrm{HM}}$ as \emph{slices}, and to the morphisms as \emph{holonomic measures}.
\end{defn}

Observe that holonomic measures also induce normal currents, and that the topology of $M$ may cause $\Hom(x,y)$ to be empty for certain pairs $x,y\in\categ_{\mathrm{HM}}$; this is why we pass to a (not necessarily unique) maximal subcategory $\tilde\categ_{\mathrm{HM}}$ of $\categ_{\mathrm{HM}}$ on which all morphism classes $\Hom(x,y)$ are nonempty. The maximality of $\tilde\categ_{\mathrm{HM}}$ here means that if we added any object $z\in \categ_{\mathrm{HM}}$ to $\tilde\categ_{\mathrm{HM}}$, then for some $x\in \tilde\categ_{\mathrm{HM}}$ we would have $\Hom(x,z)=\emptyset$ or $\Hom(z,x)=\emptyset$.

Given a continuous function $L\colon T^nM\to\R$, we let, for $\mu\in\Hom(T_1,T_2)$,
\[A(\mu)=\int_{T^nM}L\,d\mu\]
and
\[\mathbf t(\mu)=\int_{T^nM}1\,d\mu=\mu(T^nM).\]
 With these definitions, $\tilde\categ_{\mathrm{HM}}$ satisfies \ref{A:curves}--\ref{A:division}, so by Lemma \ref{lem:sufflagrangian} it is a Lagrangian category.

We give the collection of objects of $\categ_{\mathrm{HM}}$ the structure of a metric topological space by letting
\[\dist(T_1,T_2)=\inf_{\mu\in\Hom(T_1,T_2)}\int_{T^nM}\vol\,d\mu,\quad T_1,T_2\in \categ_{\mathrm{HM}}, \]
where 
\[\vol(x,v_1,\dots,v_n)=\sqrt{\det\left(g_x(v_i,v_j)\right)_{i,j=1}^n}.\] 
In geometric measure theory, $\dist$ is known as the \emph{flat distance}. 

An important property of the morphisms of $\categ_{\mathrm{HM}}$ is that they are \emph{reversible}. Indeed, to a morphism $\mu\in\Hom(S,T)$ corresponds to at least one morphism in $\Hom(T,S)$ that can be obtained by choosing a Borel-measurable mapping $\Psi\colon T^nM\to\T^nM$ such that, for each $(x,v_1,\dots,v_n)$ there is some $1\leq j\leq n$, $j=j(x,v_1,\dots,v_n)$, such that 
\[\Psi(x,v_1,\dots,v_n)=(x,v_1,\dots,v_{j-1},-v_j,v_{j+1},\dots,v_n),\] 
so that the pushforward measure $\Psi_*\mu$ satisfies 
\[\int_{T^nM}d\omega\,d\Psi_*\mu=-\int_{T^nM}d\omega\,d\mu=S(\omega)-T(\omega)\] 
and $\Psi_*\mu\in\Hom(T,S)$. In particular, this means that $\Hom(S,T)=\Hom(T,S)$.
Also, 
\[\int_{T^nM}\vol\,d\mu=\int_{T^nM}\vol\,d\Psi_*\mu.\]

With the metric $\dist$, the subset $\tilde\categ_{\mathrm{HM}}\subseteq\categ_{\mathrm{HM}}$ is closed; indeed, given $T\in \tilde\categ_{\mathrm{HM}}$ and given $S$ in the closure of $\tilde\categ_{\mathrm{HM}}$, we have $\dist(S,T)<+\infty$, which means that either $\Hom(S,T)$ or $\Hom(T,S)$ are not empty; since each morphism $\mu\in\Hom$ is reversible, in fact both $\Hom(S,T)$ and $\Hom(T,S)$ are nonempty, and $S\in \tilde\categ_{\mathrm{HM}}$.

\begin{lem}\label{lem:sigmacompact}
 With the topology induced by the metric $\dist$, $\categ_{\mathrm{HM}}$ and  $\tilde\categ_{\mathrm{HM}}$ are $\sigma$-compact.
\end{lem}

\begin{proof}
 Let $K_1\subseteq K_2\subseteq\dots\subseteq M$ be a family of nested compact balls of increasing radius, and $M=\bigcup_iK_i$.
 It follows from the Compactness Theorem for Normal Currents (see for example \cite[Theorem 1.4]{ambrosio2013compactness}, or \cite[Theorem 5.2]{ambrosio2000currents} for a very general version) 
 that the closed set $C_\ell$ of normal currents $T_\nu\in\categ_{\mathrm{HM}}$ associated to measures $\nu$ and supported in $K_\ell$ and with mass $\mathbf M(T_\nu)=\int\vol\,d\nu$ bounded by $\ell$, is sequentially compact in the weak* topology. By \cite[Corollary 7.3]{federerfleming}, the sets $C_\ell$ are also compact in the topology induced by the flat distance, so $\categ_{\mathrm{HM}}=\bigcup_\ell C_\ell$ is $\sigma$-compact. Since $\tilde\categ_{\mathrm{HM}}$ is a closed subset, it is $\sigma$-compact as well.
\end{proof}

In order to ensure we can find a weak \kam{} theory in this context through the application of Theorem \ref{thm:mainlemma}, we need some technical assumptions on $L$ to ensure that $h_t$ will satisfy the hypotheses of the theorem. 
Our choice of assumptions on  $L\colon T^nM\to\R$ is the following:
\begin{enumerate}[series=Lassmp,label=E\arabic*.,ref=E\arabic*]
 \item\label{Lfirst}
\label{L1} There is some $P>0$ such that
\[L(x,0,\dots,0)\leq P,\qquad L(x,v_1,\dots,v_n)\leq P,\]
for all $(x,v_1,\dots,v_n)\in T^nM$ with unit volume $\vol(x,v_1,\dots,v_n)=1$ and with $\|v_j\|= 1$, $j=1,\dots,n$, in other words, for all orthonormal frames $v_1,\dots,v_n$.
\item \label{L2} \label{Llast} 
Given $x_0\in M$ and $k_1>0$, there is $k_2>0$ verifying,  for all $(x,v_1,\dots,v_n)\in T^nM$, 
\[k_1(\dist_M(x_0,x)+\|v_1\|+\dots+\|v_n\|)-k_2\leq L(x,v_1,\dots,v_n).\]
Observe that, if this is true for one $x_0$, then it will be true for all $x\in M$ (perhaps with different constants $k_2$).
\end{enumerate}
Examples of Lagrangians satisfying \ref{Lfirst}--\ref{Llast} include the family \[L(x,v_1,\dots,v_n)=a(x)+\max(f(x)\vol(x,v_1,\dots,v_n),h(x)(\|v_1\|+\dots+\|v_n\|)^{b(x)})\]
with $a,b,f,h\in C^0(M)$,  $h(x)>0$ and $b(x)> 1$ everywhere, and 
\[\lim_{\dist_M(x,x_0)\to+\infty}\frac{a(x)}{\dist_M(x,x_0)}\to+\infty.\]

With these assumptions, we have

\begin{prop}[Weak \kam{} theorem]\label{prop:lamekam}
 Assume $L$ satisfies \ref{Lfirst}--\ref{Llast}. Then there is a Lipschitz function $u\colon\categ_{\mathrm{HM}}\to\R$ satisfying \eqref{eq:fixedpoint}.
\end{prop}
\begin{proof}
By Lemma \ref{lem:sigmacompact}, $\categ_{\mathrm{HM}}$ is $\sigma$-compact.
Lemma \ref{lem:lipschitzity} shows that assumptions \ref{it:lipschitzity}--\ref{it:sufficientlymanyshortcurves} of Theorem \ref{thm:mainlemma} are verified, which gives the proposition.%
\end{proof}

\begin{lem}\label{lem:lipschitzity} 
We have that, for $L$ as above, 
\begin{align*}
\text{\ref{L1}}  & \implies \text{\ref{it:lipschitzity} and \ref{it:sufficientlymanyshortcurves}},\\
\text{\ref{L2}} & \implies \text{\ref{it:superlinearity}}.%
\end{align*}
\end{lem}
\begin{proof}
 To prove the first assertion, assume that $L$ satisfies \ref{L1}.
  Let $P>0$ be as in \ref{L1}.
 Apply Lemma \ref{lem:distanceachieved} with $t=\dist(T_1,T_2)$ to obtain a measure  $\mu\in \Homt{\dist(T_1,T_2)}(T_1,T_2)$ supported on points $(x,v_1,\dots,v_n)\in T^nM$ with $\vol(v_1,\dots,v_n)=1$ and $\|v_j\|=1$ for all $j=1,\dots,n$.
 Then
 \[A(\mu)=\int_{T^nM}L\,d\mu\leq \int_{T^nM}P\,d\mu= P \dist(T_1,T_2).
 \]
 We thus get
 \[h_{\dist(T_1,T_2)}(T_1,T_2)\leq A(\mu)\leq P\dist(T_1,T_2),\]
 which is \ref{it:lipschitzity}.

  If \ref{L1} holds, then there is some $P>0$ such that, for all $\mu\in \Hom^t(T,T)$, $T\in \categ_{\mathrm{HM}}$, supported on the set of points of the form $(x,0,\dots,0)$, $x\in M$, we have
  \[\int_{T^nM}L\,d\mu\leq \int_{T^nM}P\,d\mu=P\,\mathbf t(\mu)=Pt.\]
  Observe that every measure $\mu$ with $\mu(T^nM)=t$ supported on that set of points belongs to $\Homt{t}(T,T)$.
  Thus $\Homt{t}(T,T)$ is not empty and we obtain \ref{it:sufficientlymanyshortcurves}.
 
  Now assume that \ref{L2} is verified and let us show that \ref{it:superlinearity} follows. Pick $k_1>0$ and let $k_2>0$ be as in \ref{L2}, and we will show that \ref{it:superlinearity} is verified with the same $k_1$ and $k_2$. By Lemma \ref{lem:distanceachieved}, given $T_1,T_2\in\categ_{\mathrm{HM}}$, $t> 0$, there is some $\mu\in \Homt{t}(T_1,T_2)$, so
  \begin{align*}
   k_1\dist(T_1,T_2)-k_2t&=\int_{T^nM}k_1\vol-k_2\,d\mu\\
   &\leq \int_{T^nM}k_1(\|v_1\|+\dots+\|v_n\|)-k_2\,d\mu\\
   &\leq \int_{T^nM}L\,d\mu=A(\mu),
  \end{align*}
  so, taking the infimum over all $\mu$, we get $k_1\dist(T_1,T_2)-k_2t\leq h_t(T_1,T_2)$, which is \ref{it:superlinearity}.
\end{proof}
\begin{lem}\label{lem:distanceachieved}
 For each $t> 0$ and each pair $T_1,T_2\in \tilde \categ_{\mathrm{HM}}$  such that $\Hom(T_1,T_2)$ is nonempty, there is some $\mu\in \Homt{t}(T_1,T_2)$ such that $\int_{T^nM}\vol\,d\mu=\dist(T_1,T_2)$ and $\mu$ is supported on the set
 \begin{multline*}
 H=\{(x,v_1,\dots,v_n):\text{either } v_1=\dots=v_n=0,\\ \text{or } \vol(v_1,\dots,v_n)=\tfrac{\dist(T_1,T_2)}t 
 \text{ and } \|v_j\|= \left(\tfrac{\dist(T_1,T_2)}t\right)^{1/n},\;j=1,2,\dots,n\}
 \end{multline*}
\end{lem}
\begin{proof}
If $\dist(T_1,T_2)=0$, any measure $\mu$ supported on the zero section and of mass $t$ will be an element of $\Homt{t}(T_1,T_2)$ that will satisfy the requirements of the statement of the lemma, so let us assume that $\dist(T_1,T_2)>0$.

Let $\eta\in \Hom(T_1,T_2)$ be such that $\int_{T^nM}\vol\,d\eta= \dist(T_1,T_2)>0$, which exists by an application of the Compactness Theorem for Normal Currents \cite[Theorem 1.4]{ambrosio2013compactness}, \cite[Theorem 5.2]{ambrosio2000currents}.

Let $\Phi\colon T^nM\to H\subset T^nM$ be a measurable\footnote{Observe that $\Phi$ may not enjoy any linearity, symmetry, or continuity properties.} mapping such that $\Phi(T_xM)\subseteq T_xM$ for all $x\in M$ and, if $(x,u_1,\dots,u_n)=\Phi(x,v_1,\dots,v_n)$, then: 
\begin{itemize}
    \item if the linear span of $v_1,\dots,v_n$ is not $n$-dimensional, set $\Phi(x,v_1,\dots,v_n)=0$;
    \item otherwise, pick $u_1,\dots, u_n$ such that the linear span of $v_1,\dots,v_n$ coincides with the linear span of $u_1,\dots,u_n$, both induce the same orientation and
\begin{equation}\label{eq:orthonormal}
g_x(u_i,u_j)=\begin{cases}
    c^{2/n},&i=j,\\
    0,&i\neq j,
\end{cases},\qquad c=\frac{\dist(T_1,T_2)}{t}.
\end{equation}
\end{itemize}
In other words, to every set of $n$ vectors $v_1,\dots,v_n$ spanning an $n$-dimensional subspace of $T_xM$, $\Phi$ associates an orthogonal frame $u_1,\dots,u_n$ for that same subspace, where each $u_i$ is of length $\|u_i\|=c^{1/n}$ and $\vol(x,u_1,\dots,u_n)=c$. The vectors $u_1,\dots,u_n$ can be found by the usual orthonormalization procedure and mulitplying by $c^{1/n}$. Note that $\omega\circ\Phi=\frac{c}{\vol}\omega$ for $\omega\in\Omega^n(M)$, and $\vol\circ\Phi=c$.

Let $\mu$ be the measure on $T^nM$ defined by $\mu=\frac1c\Phi_*({\vol}\cdot\eta)$; in other words, for all $f\in C^0(T^nM)$, and using the shorthands $u$ and $v$ for $(u_1,\dots,u_n)$ and $(v_1,\dots,v_n)$, respectively,
\[\int_{T^nM} f(x,u)d\mu(x,u)=\frac1c\int_{T^nM}f\circ\Phi(x,v)\vol(x,v)d\eta(x,v).\]

With this definition it follows that, for an $n$-form $\omega\in\Omega^n(M)$,
\begin{multline*}
    \int_{T^nM}\omega(x,u)d\mu(x,u)
    =\frac1c\int_{T^nM}\omega\circ\Phi(x,v){\vol(x,v)}d\eta(x,v)\\
    =\frac1c\int_{T^nM}\frac{c}{\vol(x,v)}\omega(x,v)\vol(x,v)d\eta(x,v)
    =\int_{T^nM}\omega(x,v)d\eta(x,v).
\end{multline*}
This means in particular that $\mu\in\Hom(T_1,T_2)$.
Additionally, we have
\[\mathbf t(\mu)=\mu(T^nM)=\frac1c\int_{T^nM}\vol(x,v)d\eta(x,v)=\frac{\dist(T_1,T_2)}c=t,\]
so in fact $\mu\in\Homt{t}(T_1,T_2)$.
Finally, we have
\begin{multline*}
    \int_{T^nM}\vol(x,u)d\mu(x,u)=\frac1c\int_{T^nM}\vol\circ\Phi(x,v)\vol(x,v)d\eta(x,v)\\
    =\frac1c\int_{T^nM}c\vol(x,v)d\eta(x,v)
    =\int_{T^nM}\vol(x,v)d\eta(x,v)=\dist(T_1,T_2).
\end{multline*}
 Thus the required properties of $\mu$ have been established. 

\end{proof}

\subsection{Weak KAM for argute slices}

In this subsection we give definitions conceived with the intention of clarifying what a slice of a holonomic measure is, and how the slices fit together. To fix ideas, we first offer an example.

\begin{example}\label{ex:torus}
 Let $\T^d=\R^d/\Z^d$ be the $d$-dimensional torus, and consider $\T^2\subset \T^3$ parameterized by $\phi(x_1,x_2)=(x_1,x_2,0)\operatorname{mod}\Z^3$, $x_1,x_2\in\R$. The torus $\T^2$ can be encoded as the holonomic measure $\mu_\phi$ on $T^2\T^3$ given by 
\[\mu_\phi=(\phi,\tfrac{\partial\phi}{\partial x_1},\tfrac{\partial\phi}{\partial x_2})_*\lebesgue_{[0,1]\times[0,1]},\]
where $\lebesgue_{X}$ is the Lebesgue measure on $X$; in other words, for a measurable function $f\colon T^2\mathbb T^3\to\R$, we have
\[\int_{T^2\mathbf T^3} f\,d\mu=\int_0^1\int_0^1f(
\left(\begin{smallmatrix}
    x_1\\x_2\\0
\end{smallmatrix}\right),\left(\begin{smallmatrix}
    1\\0\\0
\end{smallmatrix}\right),\left(\begin{smallmatrix}
    0\\1\\0
\end{smallmatrix}\right))\,dx_1\,dx_2.\]
We want to think of $\T^2$ as being composed of the diagonal copies of $\T^1$ parameterized by $\gamma_t(s)=(s,t-s,0)\operatorname{mod}\Z^3$, $s,t\in[0,1)$; we understand each $\gamma_t$ as a slice of $\T^2$, and they fit together as the level sets of the mapping $(x_1,x_2,0)\mapsto x_1-x_2\mod \mathbb Z$. For our purposes, we cannot assume that we know in advance how these slices fit together, because we want to construct a theory in which a family of slices makes up a submanifold, in the sense that we recover the associated holonomic measure $\mu_\phi$. Thus, although we can encode the circles $\gamma_t$ with the measures $\mu_{\gamma_t}=(\gamma_t,\gamma'_t)_*\lebesgue_{[0,1]}$, this is not a satisfactory answer because the vector $\gamma'_t(s)=(1,-1,0)$ does not encode the information contained in the vectors $\tfrac{\partial\phi}{\partial x_1}=(1,0,0)$ and $\tfrac{\partial\phi}{\partial x_2}=(0,1,0)$. 

A solution to this problem is to consider a richer object that encodes not only the information of the partial derivatives of $\phi$ but also the differential $dt=dx_1+dx_2$ of the parameter $t$ that describes how they fit together. Hence, instead of $\gamma_t$ we take the measure $\nu_t$ on $T^2M\oplus T^*M$ defined by
\[\nu_t=(\gamma_t,\left(\begin{smallmatrix}
                   1\\0\\0
                  \end{smallmatrix}\right)
,\left(\begin{smallmatrix}
  0\\1\\0
 \end{smallmatrix}\right),dx_1+dx_2)_*\lebesgue_{[0,1]}.\]
 Then we have, forgetting the last piece of information by pushing forward with the projection $\pi^1\colon T^2\T^3\oplus T^*\T^3\to T^2\T^3$,
 \[\mu_\phi=\int_0^1\pi^1_*\nu_t\,dt,\]
 so the slices fit together correctly.
 At the same time, we may recover the 1-dimensional character of each slice; they act on differential forms of order 1, according to the following recipe: for each $\nu_t$, we set $\hat T_{\nu_t}$ to be the current given, for $\omega=g_1dx_1+g_2dx_2+g_3dx_3\in\Omega^1(\T^3)$, by
 \begin{align*}\hat T_{\nu_t}(\omega)&\coloneqq\int_{} \omega\wedge \mathbbm t(x,v_1,v_2)\,d\nu_t(x,v_1,v_2,\mathbbm t)\\
  &=\int_0^1\omega\wedge (dx_1+dx_2) (\gamma_t(s),\left(\begin{smallmatrix}
                   1\\0\\0
                  \end{smallmatrix}\right)
,\left(\begin{smallmatrix}
  0\\1\\0
 \end{smallmatrix}\right))\,ds\\
 &=\int_0^1 (g_1-g_2)dx_1\wedge dx_2(\gamma_t(s),\left(\begin{smallmatrix}1\\0\\0\end{smallmatrix}\right),\left(\begin{smallmatrix}0\\1\\0\end{smallmatrix}\right))\,ds\\
 &=\int_0^1(g_1-g_2)\circ\gamma_t(s)\,\det\left(\begin{smallmatrix}
     1&0\\0&1
 \end{smallmatrix}\right)ds\\
 &=\int_0^1(g_1-g_2)\circ\gamma_t(s)\,ds\\
 &=\int_0^1\omega_{\gamma_t(s)}(\gamma'_t(s))\,ds\\
 &=\int_{\gamma_t}\omega.
 \end{align*}
 So $\hat T_{\nu_t}(\omega)$ indeed encodes $\gamma_t$. In conclusion, the measures $\nu_t$ contain enough information to simultaneously recover both an infinitesimal 2-dimensional slice of $\mu_\phi$ and the currents induced by  $\gamma_t$, and in this way they have 1- and 2-dimensional character at the same time, and they carry information about how they can fit within the full holonomic measure $\mu_\phi$.
\end{example}

We will now give the full definition.
Let $\dot T^*M$ be the bundle on $M$ whose fiber at each $x\in M$ is the Alexandroff 1-point compactification $\dot T_x^*M=T_x^*M\cup\{\infty\}$ of the cotangent space $T_x^*M$.
Let $\mathcal X=T^nM\oplus \dot T^*M$ be the bundle whose fiber at $x$ is the product $T^n_xM\times( T_x^*M\cup\{\infty\})$, and denote by $\pi^1$ and $\pi^2$ the canonical projections
\[\xymatrix{
& \ar[ld]_{\pi^1} \mathcal X \ar[rd]^{\pi^2}\\
T^nM&&{\dot T^*M}.
}\]
We will denote a point in $\mathcal X$ by $(x,v_1,v_2,\dots,v_n,\mathbbm t)$ for $x\in M$, $v_i\in T_xM$, $\mathbbm t\in \dot T^*_xM$, so that
\begin{align*}
     &\pi^1(x,v_1,\dots,v_n,\mathbbm t)=(x,v_1,\dots,v_n),\\
     &\pi^2(x,v_1,\dots,v_n,\mathbbm t)=(x,\mathbbm t).
\end{align*}

\begin{defn}\label{def:slice}
 We define  \emph{argute slices} to be those compactly-supported, finite, Radon measures $\nu$ on $\mathcal X$ such that:
 \begin{itemize}
     \item 
        the linear functional
         \begin{align*}
             \Omega^n(M)&\to \R \\
             \eta&\mapsto \int_{\mathcal X}\eta(x,v_1,v_2,\dots,v_n)\,d\nu(x,v_1,\dots,v_n,\mathbbm t)
         \end{align*}
         is continuous, so that it defines a current of dimension $n$, 
     
    \item 
        the linear functional $\hat T_\nu$ defined by
        \begin{align*}
            \hat T_\nu\colon \Omega^{n-1}(M)&\to\R\\
            \omega&\mapsto\langle \hat T_\nu,\omega\rangle=\int_{\mathcal X} (\omega\wedge \mathbbm t)(x,v_1,v_2,\dots,v_n)\,d\nu(x,v_1,\dots,v_n,\mathbbm t)
        \end{align*}
        is continuous, so that it defines a current of dimension $n-1$,
    \item 
        the $(n-1)$-dimensional current $\hat T_\nu$ has null boundary, in other words, 
        \[\partial \hat T_\nu(d\theta)=\hat T_\nu(d\theta)=0,\qquad\theta\in \Omega^{n-2}(M).\]
 \end{itemize}
 Denote the set of argute slices by $\mathscr S$.

\end{defn}

As in Example \ref{ex:torus}, the $\mathbbm t$ factor is there to perform the transition between the $(n-1)$-dimensional slice and the $n$-dimensional current it is a slice of. 

\begin{defn}\label{def:curve}
 A \emph{curve of argute slices} is a family $(\nu_t)_{t\in I}$, such that
 \begin{enumerate}[label=C\arabic*.,ref=C\arabic*]
  \item $I\subseteq\R$ is a nonempty interval,
  \item\label{C:issliced} $\nu_t\in \mathscr S$ for all $t\in I$,
  \item each $\nu_t$ is a probability measure, that is, $\nu_t(\mathcal X)=1$,
  \item for $\omega\in \Omega^n(M)$, the function
    \begin{align*}
        I&\to\R\\
        t&\mapsto \int_{\mathcal X} \omega_x(v_1,\dots,v_n)\,d\nu_t(x,v_1,\dots,v_n,\mathbbm t)
    \end{align*}
    is integrable,
  \item for each $r,t\in I$, the linear functional on differential forms
    \begin{align*}
        \Omega^n(M)&\to \R\\
        \omega&\mapsto \int_r^t\int_{\mathcal X}\omega_x(v_1,\dots,v_n)\,d\nu_s(x,v_1,\dots,v_n,\mathbbm t)\,ds
    \end{align*}
    is continuous and defines an $n$-dimensional current,
  \item\label{C:boundary} for $r,t\in I$, the  measure $\mu_{r,t}=\int_{r}^t\pi^1_*\nu_s\,ds$ on $T^nM$
  is such that its associated current $T_{\mu_{r,t}}$ has boundary
  \[\partial T_{\mu_{r,t}}=\hat T_{\nu_t}-\hat T_{\nu_r}.\]
  In other words, $\mu_{r,t}\in\Hom(\hat T_{\nu_r},\hat T_{\nu_t})$ in $\categ_{\mathrm{HM}}$ (Definition \ref{def:CHM}). Explicitly, this means that, for $\eta\in \Omega^{n-1}(M)$,
  \[
    \int_{\mathcal X}d\eta\,d\mu_{r,t}=\,d\mu\int_r^t\int_{\mathcal X} d\eta\,d\nu_sds
    =\int_{\mathcal X}\eta\wedge\mathbbm t\,d\nu_t-\int_{\mathcal X}\eta\wedge\mathbbm t\,d\nu_r.
  \]
 \end{enumerate}

  For a curve of slices $\gamma=(\nu_t)_{t\in I}$, its \emph{associated holonomic measure $\mu_\gamma$}, is  defined by 
 \[\mu_\gamma=\int_I\pi^1_{*}\nu_t\,dt.\]
 It coincides with $\mu_{0,T}$ in the notation of \ref{C:boundary}.
\end{defn}

 With these definitions and given the same information we used for $\categ_{\mathrm{HM}}$, namely, a  connected Riemannian manifold $(M,g)$ and a Lagrangian function $L\colon T^nM\to\R$ satisfying \ref{Lfirst}--\ref{Llast}, we may define a Lagrangian category $\categ_{\mathrm{AS}}$ that is closely related to $\categ_{\mathrm{HM}}$. 

 \begin{defn}
 We let the class of objects of $\hat \categ_{HM}$ be the set $\mathscr S_N$ of argute slices, and we let $\Hom(\eta_1,\eta_2)$ be the set of curves $\gamma=(\nu_t)_{t\in I}$, $I=[\alpha,\beta]\subseteq\R$, of argute slices of mass one, $\nu(\mathcal X)=1$, joining $\nu_\alpha=\eta_1$ and $\nu_\beta=\eta_2$, and such that $L$ is integrable with respect to the associated holonomic measure $\mu_\gamma$, that is, $\int_{\mathcal X}|L|\,d\mu_\gamma<+\infty$. The action of $\gamma=(\nu_t)_{t\in I}\in\Hom(\nu_1,\nu_2)$ is given by
 \[A(\gamma)=\int_I\left[\int_{T^nM}L\,d(\pi^1_*\nu_t)\right]dt=\int_{T^nM}L\,d\mu_\gamma,\]
 and the internal time duration %
 \[\mathbf t(\gamma)=\mu_\gamma(T^nM)=|I|\]%
 coincides with the length of the interval $I$ %
 because each $\nu_t$ is a probability.

 We pick a subcategory $\tilde\categ_{\mathrm{AS}}\subseteq\categ_{\mathrm{AS}}$ that is maximal with respect to the property that $\Hom^t(x,y)=\Hom(x,y)\cap\mathbf t^{-1}(t)$ is nonempty for all $x,y\in \tilde\categ_{\mathrm{AS}}$ and all $t\geq 0$; this is analogous to taking $\tilde\categ_{\mathrm{HM}}\subseteq \categ_{\mathrm{HM}}$ as in the previous section. Maximality here means that any subcategory $\categ$, with $\tilde\categ_{\mathrm{AS}}\subseteq\categ\subseteq\categ_{\mathrm{AS}}$ that also satisfies that all $\Hom^t(x,y)$ are nonempty, must be $\categ=\tilde\categ_{\mathrm{AS}}$.
 \end{defn}

  Analogously to the metric structure in $\categ_{\mathrm{HM}}$, we introduce the following distance in $\categ_{\mathrm{AS}}$:
  \[\dist(\nu_1,\nu_2)=\inf_{\gamma\in \Hom(\nu_1,\nu_2)}\int_{T^nM}\vol\,d\mu_\gamma.\]
  By our choice of $\tilde\categ_{\mathrm{AS}}$, this distance is finite throughout that subcategory.
 
 To obtain a weak \kam{} theory, we will assume that $L$ satisfies \ref{Lfirst}--\ref{Llast}.
 The main result of this section is
 
 \begin{thm}\label{thm:weakkamsol}
  If the manifold $M$ is  connected and \ref{Lfirst}--\ref{Llast} hold,  then there is a weak \kam{} solution $u\colon\tilde\categ_{\mathrm{AS}} \to\R$, as defined in Section \ref{sec:weakkamsolutions}.
 \end{thm}
 
 \begin{rmk}
  Remark \ref{rmk:hamiltonjacobi} and Corollary \ref{cor:exactdescent} explain the way in which the function $u$ rendered by the theorem can be connected to an exact differential form $d\omega$ on $M$ that satisfies a sort of Hamilton--Jacobi equation, thus furthering the analogy with the original weak \kam{} theory \cite{fathibook}.
 \end{rmk}
 
\begin{proof}[Proof of Theorem \ref{thm:weakkamsol}]
 The manifold $M$ is $\sigma$-compact because, since it is connected,  we can fix a point $x_0\in M$ and express $M$ as a union of countably-many compact balls $M=\bigcup_{R\in \mathbb N}\overline B_{R}(x_0)$.
  By Lemma \ref{lem:kamamenable}, the category $\categ_{\mathrm{AS}}$ is Lagrangian, and it follows using also Lemma \ref{lem:weakkamcoarse} that we may apply Theorem \ref{thm:mainlemma} to  $\categ_{\mathrm{AS}}$, which gives %
 a function $u$ satisfying \eqref{eq:fixedpoint}. 
 Lemma \ref{lem:kamamenable} shows that $\tilde\categ_{\mathrm{AS}} $ is finely \kam{} amenable. By Lemma \ref{lem:amenablessolvable}, it follows that $u$ is a weak \kam{} solution.
\end{proof}

 \begin{lem}\label{lem:weakkamcoarse}
  Assume that  \ref{Lfirst}--\ref{Llast} hold.
  Then the objects of the category $\tilde\categ_{\mathrm{AS}}$ form a $\sigma$-compact metric space that satisfies hypotheses \ref{it:lipschitzity}--\ref{it:sufficientlymanyshortcurves}.%
 \end{lem}
 
 \begin{proof}[Proof of Lemma \ref{lem:weakkamcoarse}]
  Fix a point $x_0\in M$ and consider the closed balls $\overline B_R(x_0)\subseteq M$ for $R\in \mathbb N$; then $M=\bigcup_{R=1}^\infty \overline B_R(x_0)$.
  For $R\in \mathbb N$, consider also the subcategory $\hat \categ_R$ whose objects are those $\nu\in\categ_{\mathrm{AS}}$ supported on the compact subset of $\mathcal X$ consisting of points $(x,v_1,\dots,v_n,\mathbbm t)$ with $x\in \overline  B_R(x_0)$, $\|v_1\|,\dots,\|v_n\|\leq R$, and $\mathbbm t\in \dot T^*M$ (observe that $\dot T^*M$ is topologically a sphere, so it is compact), with morphisms corresponding to the curves of those argute slices contained in  $\categ_R$. It follows from the same argument as in Lemma \ref{lem:sigmacompact} that $\categ_R$ is sequentially compact. Since $\categ_{\mathrm{AS}}=\bigcup_R\categ_R$, it is $\sigma$-compact. 
  
  On the other hand,  $\tilde\categ_{\mathrm{AS}}$ is a closed subset of $\categ_{\mathrm{AS}}$ as a consequence of its maximality and the sequential compactness: if $(\nu_1^i)_i$ and $(\nu_2^i)_i$ are two converging sequences of argute slices in $\tilde\categ_{\mathrm{AS}}$, $\nu^i_1\to\nu^0_1$ and $\nu^i_2\to\nu^0_2$, and if $\mu^i\in\Hom^t(\nu_1^i,\nu_2^i)$, then we can use an argument analogous to the proof of Lemma \ref{lem:sigmacompact} to realize all the $\mu^1$ as curves in $\categ_N$ for $N$ large, and then we can apply the Prokhorov theorem \cite{prokhorov} to obtain a converging subsequence $\mu^{i_j}\to \mu_0\in\Hom^t(\nu^0_1,\nu^0_2)$, so $\nu_1^0,\nu^0_2\in\tilde\categ_{\mathrm{AS}}$.
  
  Properties \ref{it:lipschitzity}--\ref{it:sufficientlymanyshortcurves} follow from the arguments analogous to those used in the proof of Lemma \ref{lem:lipschitzity}.
 \end{proof}

 \begin{lem}\label{lem:kamamenable}
  Assume \ref{Lfirst}--\ref{Llast} hold. Then the category $\categ_{\mathrm{AS}}$ is Lagrangian and finely \kam{} amenable.
 \end{lem}
 \begin{proof}[Proof of Lemma \ref{lem:kamamenable}]
  First, we note that it is clear that \ref{A:curves} and \ref{A:division} hold. By Lemma \ref{lem:sufflagrangian}, $\tilde\categ_{\mathrm{AS}}$ is Lagrangian.

  In the notation of Section \ref{sec:weakkamsolutions}, for $\eta,\nu\in\tilde\categ_{\mathrm{AS}}$ and $\gamma\in\Hom(\eta, \nu)$ given by a curve $\gamma$ of argute slices, $\gamma=(\nu_t)_{t\in I}$, we let $\bar\gamma(t)=\nu_t$. With this definition, it is immediate that \ref{A:lagrangian}--\ref{A:restriction} hold.

  Let $\nu\in \tilde\categ_{\mathrm{AS}}$, and $u$ a function satisfying \eqref{eq:fixedpoint}, and let us show that \ref{A:minimum} holds; we will do so with $k_0=1$. 
  Let 
  \[m=\inf_{\substack{\eta\in \tilde\categ_{\mathrm{AS}}\\\gamma\in \Hom(\eta,\nu)\\\mathbf t(\gamma)=2}}u(\eta)+A(\gamma).\]

  By \eqref{eq:fixedpoint}, we know that $u$ belongs to $\mathcal H(c_0)$ for some $c_0$, and by Lemma \ref{lem:auxlemmaLO}\ref{it:lipschitzityofHc}, we know that $u$ is Lipschitz continuous, so that there is some constant $C>0$ such that, in particular,
  \[|u(\eta)-u(\nu)|\leq C\dist(\nu,\eta).\]
  Let $k_1=C+1$ and let $k_2$ be as in \ref{L2}, so that we have
  \[(C+1)(\dist_M(x_0,x)+\|v_1\|+\dots+\|v_n\|)-k_2\leq L(x,v_1,\dots,v_n),\qquad (x,v_1,\dots,v_n)\in T^nM.\]
  It follows that, if the pair $(\eta,\gamma)$ contending in the definition of $m$ satisfies $u(\eta)+A(\gamma)\leq m+\epsilon$, then
  \begin{align*}
      m+\epsilon
      &\geq u(\eta)+\int_{\mathcal X}L(x,v_1,\dots,v_n)d\gamma(x,v_1,\dots,v_n,\mathbbm t)\\
      &\geq u(\eta)+\int_{\mathcal X}((C+1)(\dist_M(x_0,x)+\|v_1\|+\dots+\|v_n\|)-k_2)\,d\gamma(x,v_1,\dots,v_n,\mathbbm t)\\
      &\geq \left(u(\nu)-C\dist(\nu,\eta)\right)+(C+1)\int_{\mathcal X}\dist_M(x_0,x)+\|v_1\|+\dots+\|v_n\|d\gamma-k_2\mathbf t(\gamma) \\
      &\geq u(\nu)-C\int_{\mathcal X}\dist_M(x_0,x)+\|v_1\|+\dots+\|v_n\|d\gamma \\
      &\qquad \qquad\qquad+(C+1)\int_{\mathcal X}\dist_M(x_0,x)+\|v_1\|+\dots+\|v_n\|d\gamma-2k_2 \\
      &=u(\nu)+\int_{\mathcal X}\dist_M(x_0,x)+\|v_1\|+\dots+\|v_n\|d\gamma-2k_2.
  \end{align*}
  In other words, 
  \[\int_{\mathcal X}\dist_M(x_0,x)+\|v_1\|+\dots+\|v_n\|d\gamma(x,v_1,\dots,v_n,\mathbbm t)\leq m+\epsilon-u(\nu)+2k_2.\]
  Observe that this means, in particular, that $m$ is finite.

  We now use this estimate to show that the set of curves we are interested in is sequentially compact.
  Let, for $r>0$, $K_r\subset \mathcal X$ be the set of points $(t,x,v_1,\dots,v_n,\mathbbm t)$ with $0\leq t\leq 2$, $\dist_M(x_0,x)+\|v_1\|+\dots+\|v_n\|\leq r$, and $\mathbbm t\in \dot T_x^*M$. 
  From the definition of $K_r$, it is clear that the set $\pi^1(K_r)$ is closed and bounded, so it is compact, and that the set $\pi^2(K_r)$ is closed. The set $\pi^2(K_r)$ is contained in the compact set $\bigcup_{\dist_M(x_0,x)\leq r}\dot T^*_xM$, so it is compact as well.
  This means that the set $[0,2]\times\pi^1(K_r)\times \pi^2(K_r)$  is compact, and so is its closed subset $K_r$. 
  Applying Markov's inequality, we have
  \begin{multline*}
      \gamma(\mathcal X\setminus K_r)
      \leq \frac1r\int_{\mathcal X}\dist_M(x_0,x)+\|v_1\|+\dots+\|v_n\|\,d\gamma(x,v_1,\dots,v_n,\mathbbm t)\\
      \leq \frac{m+\epsilon-u(\nu)+2k_2}r.
  \end{multline*}
  Let $\mathscr C$ be the set of curves $\gamma$ appearing in the pairs $(\eta,\gamma)$ contending in the definition of $m$ and with $u(\eta)+A(\gamma)\leq m+\epsilon$.
  The curves in $\mathscr C$ are contained in the set $Y$ of families of positive Radon measures on $\mathcal X$ with total mass $2$ and whose mass on the complement $\mathcal X\setminus K_r$ is uniformly at most $\frac{m+\epsilon-u(\nu)+2k_2}{r}$, which is to say that this collection of measures is tight. By the Prokhorov theorem, $Y$ is sequentially compact. 
  Since $\mathscr C$ is closed and contained in $Y$, $\mathscr C$ is sequentially compact as well. 
  
  Since $m$ is finite, we can take sequences $(\eta_i)$ and $(\gamma_i)$, with $\eta_i\in \tilde\categ_{\mathrm{AS}}$ and $\gamma_i\in \Homt{2}(\eta_i,\nu)$, such that $u(\eta_i)+A(\gamma_i)\to m$. Since $\gamma_i\in \mathscr C$, we can assume, perhaps after passing to a subsequence, that $(\gamma_i)$ converges to some curve $\hat\gamma$. This means that also the sequences $A(\gamma_i)$ and $u(\eta_i)$ converge. Observe that, since $u$ satisfies \eqref{eq:fixedpoint}, we have
  \[ u(\nu)=\lim_{i\to+\infty}u(\eta_i)+A(\gamma_i)+c_0\mathbf t(\gamma_i)=m+2c_0.\]

  Disintegrate $\hat\gamma$ over the projection $(t,x,v_1,\dots,v_n,\mathbbm t)\mapsto t\in[0,2]$ to get, for almost every $t\in[0,2]$, a Borel probability measure $\vartheta_t$ on $\mathcal X$; it is routine to check that, for almost every $t$, $\vartheta_t$ is an argute slice. Fix $0< t_0< 1$ such that $\vartheta_{t_0}\eqqcolon\vartheta_0$ is an argute slice.

  We claim that the pair $(\vartheta_0,\hat\gamma|_{[t_0,2]})$ achieves the minimum in \ref{A:minimum} with $t=2-t_0>k_0=1$.  
  Let us explain why this is true.
  Since $u$ satisfies \eqref{eq:fixedpoint}, it also satisfies \eqref{eq:dominated},
  which gives a lower bound for the minimum in \ref{A:minimum}: we have
  \begin{multline*}
      u(\vartheta_0)+A(\hat\gamma|_{[t_0,2]})\geq
      \inf_{\substack{\eta\in \tilde\categ_{\mathrm{AS}}\\\gamma\in\Hom(\eta,\nu)\\\mathbf t(\gamma)=2-t_0}}u(\eta)+A(\gamma)\\
      \geq u(\nu)-c_0(2-t_0)=m+2c_0-c_0(2-t_0)=m+c_0t_0.
  \end{multline*}
  On the other hand, from our choice of $\hat\gamma$ and $\vartheta_0$ and again using \eqref{eq:dominated}, we have
  \begin{multline*}
      u(\vartheta_0)+A(\hat\gamma|_{[t_0,2]})
      =\lim_{i\to+\infty} u(\vartheta_0)+A(\gamma_i|_{[t_0,2]})\\
      \leq \lim_{i\to+\infty} A(\gamma_i|_{[0,t_0]})+u(\eta_i)+c_0t_0+A(\gamma_i|_{[t_0,2]})\\
      =\lim_{i\to+\infty} A(\gamma_i|_{[0,2]})+u(\eta_i)+c_0t_0
      =m+c_0t_0.
  \end{multline*}
  This means that $u(\vartheta_0)+A(\hat\gamma|_{[t_0,2]})$ is as small as the lower bound we had for the infimum that interests us, so it achieves the minimum. This shows that \ref{A:minimum} holds in $\tilde\categ_{\mathrm{AS}}$, and hence finishes the proof that this category is finely \kam{} amenable.
 \end{proof}

\section{Characterization of minimizable La\-gran\-gians}
\label{sec:coarsechar}
In this section we characterize the Lagrangian actions minimizable by holonomic measures in Theorem \ref{thm:coarse}. We then connect the weak \textsc{kam} solutions from Theorem \ref{thm:weakkamsol} with exact differential forms in Corollary \ref{cor:exactdescent}.

We will work in the context of the Lagrangian category $\categ_{\mathrm{HM}}$ defined in Section \ref{sec:skeleton}, so that we are given a manifold $M$ and a Borel-measurable function $L\colon T^nM\to\R$. The action $A$ is given by integration of $L$ with respect to the holonomic measures that constitute the morphisms of $\categ_{\mathrm{HM}}$.

We denote by $C^\infty(T^nM)$ the space of infinitely-differentiable functions on the bundle $T^nM$, and we let $\mathscr E'(T^nM)$ denote the space of compactly-supported distributions on $T^nM$, which is dual to $C^\infty(T^nM)$. The set $\mathscr E'(T^nM)$ contains, in particular, all compactly-supported, Radon measures $\mu$ on $T^nM$. 

Fix two objects $T_1,T_2$ in $\categ_{\mathrm{HM}}$ such that the set $\Hom(T_1,T_2)$ is not empty. In other words, $T_1$ and $T_2$ are normal $(n-1)$-dimensional currents without boundary, and there exist compactly-supported Radon measures $\mu$ on $T^nM$ such that the induced current $T_\mu$ has boundary $\partial T_{\mu}=T_2-T_1$, so that $\mu\in\Hom(T_1,T_2)$.

Recall a topological vector space $V$ is \emph{sequential} if for every set $S\subset V$ and every element $s$ in the closure $\overline S$ there exists a sequence of points $\{s_i\}_i\subseteq S$ such that $s_i\to s$. This is verified if $V$ is normed, metric, or first countable (that is, if every point has a countable neighborhood basis).

Let $E$ be a complete, sequential, locally-convex topological vector space of Borel measurable functions on $T^nM$ that contains $C^\infty(T^nM)$ as a subspace. Assume that every element of $\Hom(T_1,T_2)$ induces a continuous linear functional on $E$, i.e., $\Hom(T_1,T_2)\subseteq E^*$, and that the topology of $C^\infty(T^nM)$ is finer than the one this subspace inherits from $E$, or, in other words, that every open set in the inherited topology is an open set in the topology induced by the seminorms
\[|f|_{K,k}=\sum_{|I|\leq k}\sup_{x\in K}\left|\partial^If(x)\right|,\]
where $k\geq 0$, $K\subset T^nM$ is compact, and the sum is taken over all multi-indices $I=(i_1,\dots,i_n)$ with $|I|=\sum_ri_r\leq k$. This assumption implies that every continuous linear functional $\vartheta\in E^*$ defines a compactly-supported distribution when restricted to $C^\infty(T^nM)$. For example, $E$ can be the space $C^k(T^nM)$, $k\in[0,+\infty]$, with the topology of uniform convergence on compact sets of the derivatives of order $\leq k$.

\begin{thm}\label{thm:coarse}
 If $L$ is an element of $E$ such that its action functional $A$ reaches its minimum within $\Hom(T_1,T_2)$ at some measure $\mu$, then there exist differential forms $\omega_1,\omega_2,\dots$ in $\Omega^{n-1}(M)$, and nonnegative functions $g_1,g_2,\dots$ in $E$ such that
 \[\lim_{i\to+\infty}\int_{T^nM}g_i\,d\mu=0,\]
 and
 \[L=\lim_{i\to+\infty}d\omega_i+g_i,
 \]
 where the limit is taken in $E$. In particular, 
 \[\int_{T^nM} L\,d\mu=\lim_i\partial T_{\mu}(\omega_i)=T_2(\omega_i)-T_1(\omega_i).\]
\end{thm}
\begin{rmk}\label{rmk:hamiltonjacobi}
 In many cases, like in the situation of Corollary \ref{cor:exactdescent} below, using the version of the Arzel\`a--Ascoli theorem given in Lemma \ref{lem:arzelaascoli}, one can extract a Lipschitz limit $\omega$ of the forms $\omega_i$. It then satisfies 
 \[d\omega -L\leq 0,\]
 with equality $\mu$-almost everywhere. Here one can think of \[H(x,v,p)=p-L(x,v)\] as the Hamiltonian, so that we are looking at a sort of \emph{Hamilton-Jacobi equation}, \[H(x,v,d\omega)=0.\] In this sense, we can say that $\omega$ is a \emph{critical subsolution of the Hamilton-Jacobi equation}; cf. \cite{fathibook}. 
\end{rmk}

In order to prove the theorem, we need

\begin{lem}\label{lem:dualsets}
 In the setting of Theorem \ref{thm:coarse}, let
 \begin{align*}
  Q=&\{\ell\colon E^*\to\R\;|\;\textrm{$\ell$ is affine and continuous, $\ell(\mu)\geq 0$ for all $\mu\in\Hom(T_1,T_2)$}\},\\
  R=&\{\ell\colon E^*\to\R\;|\;\textrm{$\ell(\xi)=\xi(d\omega+g)- T_2(\omega)-T_1(\omega)$,}
   \textrm{  $\omega\in \Omega^{n-1}(M)$, $g\in E$, $g\geq 0$}\}.
 \end{align*}
 Then we have $Q=\overline R$ in $E$.
\end{lem}

\begin{proof}[Proof of Lemma \ref{lem:dualsets}]
 For a convex subset $A$ of a topological vector space, we will denote by $A'$ the set of real-valued continuous affine functionals in that are nonnegative on $A$. 
 
 We first observe that $Q'=\Hom(T_1,T_2)=R'$. To see why, note that the set of functionals induced by nonnegative elements of $C^\infty(T^nM)$ is a subset both of $Q$ and of $R$ (in the latter case, take $\omega=0$ and $g\in C^\infty(T^nM)\subseteq E$), so by \cite[\S6.22]{liebloss} all elements of $Q'$ and $R'$ can be represented as integration over compactly-supported, nonnegative, Radon measures. Also, if $\omega\in\Omega^{n-1}(M)$, then the affine functional 
 \[\ell_\omega(\xi)=\xi(d\omega)-T_2(\omega)+T_1(\omega)\]  
 belongs to both $Q$ and $R$, so it is nonnegative throughout $Q'$ and $R'$. Since its negative $\ell_{-\omega}=-\ell_\omega$ is, for the same reason, nonnegative throughout $Q'$ and $R'$, we conclude that 
 \[0=\ell_\omega(\xi)=\xi(d\omega)-T_2(\omega)+T_1(\omega)=\partial T_\xi(\omega)-T_2(\omega)+T_1(\omega)\] 
 for all $\xi\in Q'\cup R'$ and all $\omega\in \Omega^{n-1}(M)$. In other words, the current $T_\xi$ induced by the measure  representing $\xi$ has boundary $\partial T_\xi=T_2-T_1$. So indeed $Q'=\Hom(T_1,T_2)=R'$.

 Since $Q'=R'$, we have $\overline Q=\overline R$. 
 Indeed, if there were some $q\in \overline Q\setminus \overline R$, then the Hahn--Banach separation theorem would produce a continuous affine functional $\ell\colon E\to\R$ with $\ell(r)<a<b<\ell(q)$ for some $a,b\in\R$ and all $r\in \overline R$, whence the continuous affine functional $\ell-a$ would be positive on $R$ and not on $Q$, contradicting $Q'=R'$; a similar situation would arise if  $\overline R \setminus \overline Q\neq \emptyset$.

 The claim of the lemma follows from the fact that $Q$ is closed.
\end{proof}

\begin{proof}[Proof of Theorem \ref{thm:coarse}]
 The functional $\ell(\xi)=\int L\,d\xi-\int L\,d\mu$ belongs to the set $Q$ in the statement of Lemma \ref{lem:dualsets}, and by the lemma it also belongs to $\overline R$. The sequentiality of $E$ implies that the topological closure equals the sequential closure, so there exists a sequence of affine functions $\ell_i\in R$ of the form
 \[\ell_i(\xi)=\xi(g_i+d\omega_i)-T_2(\omega_i)-T_1(\omega_i),\quad i=1,2,\dots,\] 
 with $g_i\in E$, $\omega_i\in \Omega^{n-1}(M)$, $ g_i\geq 0$,and converging to $\ell_i\to \ell$. 
 Comparing the linear and constant parts of the functionals $\ell$ and $\ell_i$, we conclude that $\int L\,d\mu=\lim_i T_2(\omega_i)-T_1(\omega_i)$.
 We also have that 
 \[0=\ell(\mu)=\lim_i\ell_i(\mu)=\lim_i\int_{T^nM} g_i\,d\mu,\] 
 where the last equality is true because the boundary of $T_\mu$ is $T_2-T_1$, that is $\int d\omega_i\,d\mu= T_2(\omega)-T_1(\omega_i)$. 
\end{proof}
\begin{coro}\label{cor:exactdescent}
 Let $L\colon T^nM\to\R$ be a $C^0$ function satisfying the hypotheses of Theorem \ref{thm:weakkamsol}.
 Assume that there is a family $\{\gamma_\alpha\}_{\alpha\in I}$ of curves of argute slices %
 such that the support of the holonomic measures $\mu_{\gamma_\alpha}$ covers almost all of $M$, that is, such that if $\gamma_\alpha=(\nu_s^\alpha)_{s\in[0,T_\alpha]}$ for $\alpha\in I$, then the complement of the set
 \[
  \bigcup_{\alpha\in I} \pi_M(\supp\mu_{\gamma_\alpha})
  =\bigcup_{\alpha\in I}\overline{\bigcup_{s\in[0,T_\alpha]} \pi_M(\supp\nu_s^\alpha)}
  \subseteq M
 \]
 has Lebesgue measure zero on $M$. 
 
 Moreover, assume that the curves $\gamma_\alpha$ minimize the action $A$ simultaneously, in the sense that every convex combination of the associated holonomic measures $\mu_{\gamma_\alpha}$ minimizes $A$ with respect to all positive, compactly-supported, Radon measures with the same boundary.
 
 Then the function $u$ in the conclusion of Theorem \ref{thm:weakkamsol} corresponds to a Lipschitz form $\omega\in \Omega^{n-1}(M)$,  and for all $\alpha\in I$, $t\in[0,T_\alpha]$ we have
 \[ T_{\nu^\alpha_t}(\omega)=u(\nu^\alpha_t).\]
\end{coro}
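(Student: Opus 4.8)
The plan is to apply Theorem \ref{thm:coarse} to the holonomic measure $\mu$ obtained by averaging the $\mu_{\gamma_i}$, deduce the existence of a limiting Lipschitz form $\omega$, and then show that the function $\nu\mapsto\langle T_\nu,\omega\rangle$ agrees with $u+c_0$ along each minimizing curve. First I would address the measure-zero claim: since each $\gamma_i$ minimizes $A_L$ among curves sharing its endpoints, the associated holonomic measure $\mu_{\gamma_i}=\int_0^{T_i}\pi_{1*}\nu_s^i\,ds$ is supported on $\pi_M^{-1}\bigl(\bigcup_{s}\pi_M(\supp\nu_s^i)\bigr)$, and by the simultaneous-minimization hypothesis a suitable convex combination $\mu=\sum_i\lambda_i\mu_{\gamma_i}$ minimizes $A_L$ among all competitors with its boundary. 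If the union of the $\pi_M(\supp\nu_s^i)$ missed a positive-measure set $B\subset M$, one could lower the action by redistributing mass (using superlinearity of $L$ to control the $L$-cost, exactly as in the variational arguments behind Theorem \ref{thm:coarse}); alternatively, the covering hypothesis is assumed outright in the statement, so this point is essentially given and I would only need to record why $\mu$'s support projects into that union.

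Next I would invoke Theorem \ref{thm:coarse} with $E=C^0(T^nM)$ (or $C^0$ with uniform-on-compacts topology, which satisfies the hypotheses there) applied to $\mu$: this produces forms $\omega_i\in\Omega^{n-1}(M)$ and nonnegative $g_i\in E$ with $\langle c,\omega_i\rangle\to 0$, $\int g_i\,d\mu\to 0$, and $L=c_1+\lim_i(d\omega_i+g_i)$ where $c_1=\tfrac1{\mu(T^nM)}\int L\,d\mu$. Following Remark \ref{rmk:hamiltonjacobi}, I would use the superlinearity of $L$ together with the Arzel\`a--Ascoli variant (Lemma \ref{lem:arzelaascoli}) to extract from $(\omega_i)$ a subsequence converging uniformly on compacts to a Lipschitz form $\omega\in\Omega^{n-1}(M)$, so that $d\omega-L\leq -c_1=:c_0$ everywhere, with equality $\mu$-a.e.; since the covering set has full measure, the equality $d\omega-L=c_0$ holds Lebesgue-a.e. on $M$, hence along a.e.\ point of the support of each $\gamma_i$.

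The core step is then to identify $u$ with $\nu\mapsto\langle T_\nu,\omega\rangle+c_0$ along the minimizing curves. Fix $\gamma_i=(\nu^i_t)_{t\in[0,T_i]}$ and consider $\phi(t)=\langle T_{\nu^i_t},\omega\rangle$. By item C\ref{C:boundary} of Definition \ref{def:curve}, for $0\leq s\leq t\leq T_i$ one has $\phi(t)-\phi(s)=\langle T_{\nu^i_t}-T_{\nu^i_s},\omega\rangle=\langle\partial T_{\gamma|[s,t]},\omega\rangle=\langle T_{\gamma|[s,t]},d\omega\rangle=\int_s^t\!\int_{T^nM}(d\omega)\,d(\pi_{1*}\nu^i_r)\,dr$. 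Using $d\omega=L+c_0$ $\pi_{1*}\nu^i_r$-a.e.\ (valid because the projected support lies in the full-measure covering set) this equals $\int_s^t\!\int L\,d(\pi_{1*}\nu^i_r)\,dr + c_0\int_s^t\|\pi_{1*}\nu^i_r\|\,dr$, i.e.\ the $L$-action of $\gamma_i$ restricted to $[s,t]$ plus $c_0$ times the mass. Because $\gamma_i$ is a minimizer between its endpoints, this action realizes $h_{m}(\nu^i_s,\nu^i_t)$ for the appropriate mass $m$, so $\phi$ satisfies the same dynamic-programming identity as the weak \textsc{kam} solution; combining with the fixed-point characterization of $u$ from Theorem \ref{thm:weakkam} and the fact that both $u$ and $\phi-c_0$ are Lipschitz and calibrated along $\gamma_i$, one concludes $u(\nu^i_t)=\phi(t)-c_0=\langle T_{\nu^i_t},\omega\rangle-c_0$ up to a single global additive constant, which is absorbed into $c_0$. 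I expect the main obstacle to be the last identification: one must argue that the calibration property along the family $\{\gamma_i\}$, together with the full-measure covering, forces the global equality $u=\langle T_\bullet,\omega\rangle+c_0$ rather than merely an inequality on each curve — this requires pinning down that the $c_0$ produced by Theorem \ref{thm:weakkam} coincides with $-c_1$ from Theorem \ref{thm:coarse}, presumably by evaluating both against the minimizing measure $\mu$ and using that $u$ is constant along the closed "recurrent" part of the dynamics.
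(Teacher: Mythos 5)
Your opening moves match the paper's proof exactly: form a convex combination of the $\mu_{\gamma_i}$, apply Theorem~\ref{thm:coarse} to obtain the approximating forms, and extract a Lipschitz limit $\omega$ via Lemma~\ref{lem:arzelaascoli}. In fact the paper's proof of the corollary consists of \emph{only} these three steps and stops there, leaving the verification of $\langle T_{\nu^i_t},\omega\rangle=u(\nu^i_t)+c_0$ unstated. The second half of your proposal is therefore filling a genuine gap in the paper's own argument, and your route is the natural one: use item~C\ref{C:boundary} of Definition~\ref{def:curve} to write $\phi(t)-\phi(s)=\langle T_{\gamma|[s,t]},d\omega\rangle$, substitute $d\omega=L+c_0$ on the support of the minimizing curve, and match against the dynamic-programming identity that characterizes $u$. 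The difficulties you flag at the end are real and are exactly what would need to be pinned down: the equality $d\omega-L=c_0$ is only $\mu$-a.e., hence only $\pi_{1*}\nu^i_r$-a.e.\ for a.e.\ $r$ (not every $r$); the constant from Theorem~\ref{thm:coarse} must be reconciled with the $c_0$ of Theorem~\ref{thm:weakkam}; and one must check that the mass $\mathbf M(\gamma_i|[s,t])$ entering the weak \textsc{kam} identity agrees with the quantity $\int_s^t\|\pi_{1*}\nu^i_r\|\,dr$ that appears when you integrate the constant $c_0$. None of these are addressed in the paper, so your awareness of them is a strength rather than a defect of the proposal.
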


\begin{proof}
 Form a convex combination of all the the associated holonomic measures $\mu_{\gamma_\alpha}$ using a probability measure supported throughout $I$. Apply Theorem \ref{thm:coarse} to obtain forms $\{\omega_j\}_{j=1}^\infty\subset\Omega^{n-1}$ as in the statement of that result, and then apply also Lemma \ref{lem:arzelaascoli} to obtain a subsequence of the forms $\omega_j$ converging to a Lipschitz differential form $\omega$ that corresponds to $u$ as statement of the corollary.
\end{proof}

\begin{lem}[Arzel\`a--Ascoli for sections of a vector bundle]\label{lem:arzelaascoli}
 Let $\alpha_1,\alpha_2,\dots$ be a sequence of smooth, uniformly bounded as sections of a vector bundle $F\to M$ on the compact manifold $M$, and assume that their derivatives $d\alpha_1,d\alpha_2,\dots$ are also uniformly bounded. Then there is a subsequence $\{i_j\}_{j=1}^\infty\subset\N$ such that the sequence $\alpha_{i_1},\alpha_{i_2},\dots$ converges uniformly to a Lipschitz section $\alpha$ of $F$. 
\end{lem}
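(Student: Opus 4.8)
The plan is to reduce the statement to the classical Arzelà–Ascoli theorem by means of a finite partition of unity. First I would use compactness of $M$ to choose a finite atlas $\{(U_k,\phi_k)\}_{k=1}^N$ such that each $U_k$ is relatively compact, the vector bundle $F$ is trivial over each $U_k$ via a local frame $e_1^{(k)},\dots,e_r^{(k)}$ (where $r=\operatorname{rank}F$), and such that there is a refinement by relatively compact opens $V_k\Subset U_k$ with $\bigcup_k V_k=M$. Fix also a Riemannian metric on $M$ and a bundle metric on $F$; "uniformly bounded" and "Lipschitz" are then unambiguous, and any two such choices give equivalent notions since $M$ is compact.

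Next I would transport the problem to Euclidean space. On each $U_k$, write $\alpha_i = \sum_{a=1}^r f_{i,a}^{(k)} \, e_a^{(k)}$, so each $f_{i,a}^{(k)}\colon U_k\to\R$ is smooth. The hypothesis that the $\alpha_i$ are uniformly bounded as sections, together with the fact that the frame $e_a^{(k)}$ and its inverse are bounded on the compact set $\overline{V_k}$, gives a uniform bound on the functions $f_{i,a}^{(k)}$ over $\overline{V_k}$. Similarly, expanding $d\alpha_i$ in terms of $d f_{i,a}^{(k)}$ and $d e_a^{(k)}$ (the latter bounded on $\overline{V_k}$) shows that the uniform bound on $d\alpha_i$ yields a uniform bound on the gradients $d f_{i,a}^{(k)}$ on $\overline{V_k}$. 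Pulling back by $\phi_k$, we obtain, for each fixed $k$ and $a$, a sequence of real-valued functions on a relatively compact subset of $\R^{\dim M}$ that is uniformly bounded and uniformly Lipschitz, hence equicontinuous.

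Now I apply the classical Arzelà–Ascoli theorem together with a diagonal argument: enumerate the finitely many pairs $(k,a)$, extract nested subsequences so that along a single subsequence $\{i_j\}$ every $f_{i_j,a}^{(k)}$ converges uniformly on $\overline{V_k}$ to some limit function $f_a^{(k)}$, which is again Lipschitz with the same constant (a uniform limit of functions with a common Lipschitz constant is Lipschitz with that constant). Setting $\alpha\big|_{V_k}=\sum_a f_a^{(k)}e_a^{(k)}$ defines a section on each $V_k$; the limits agree on overlaps $V_k\cap V_{k'}$ because $\alpha_{i_j}$ converges there computed in either frame, and the frame change is a fixed smooth (hence continuous) matrix. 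Thus $\alpha$ is a well-defined continuous section of $F$, it is the uniform limit of $\alpha_{i_j}$, and it is Lipschitz: on each chart it is Lipschitz with the uniform constant coming from step two, and finitely many overlapping Lipschitz pieces on a compact manifold patch to a globally Lipschitz section (the Lipschitz constant on all of $M$ is controlled by the local constants times a constant depending only on the fixed atlas and metric).

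The only mildly delicate point — the "main obstacle," such as it is — is bookkeeping the frame changes: one must be careful that "uniformly bounded derivative as a section" really does translate into a uniform Lipschitz bound for the component functions in each local frame, which requires the frames and their first derivatives to be bounded on the relevant compact sets; compactness of $M$ and relative compactness of the $V_k$ make this automatic. Everything else is the standard Arzelà–Ascoli mechanism plus a partition-of-unity patching argument.
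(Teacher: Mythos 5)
Your proof is correct, and it takes a mildly different route from the paper's. The paper fixes a \emph{global} finite collection of sections $\beta_1,\dots,\beta_N$ of $F$ whose values span each fibre (the paper writes ``basis,'' which taken literally would force $F$ to be trivial; what is actually needed is a spanning set, which exists because any vector bundle over a compact manifold is a direct summand of a trivial one, and the coefficients $\phi_i^j$ can then be chosen smoothly and uniquely, e.g.\ as the minimal-norm representation with respect to a bundle metric). It then writes $\alpha_i=\sum_j\phi_i^j\beta_j$ once, globally, and applies scalar Arzel\`a--Ascoli to the finitely many sequences $\{\phi_i^j\}_i$. You instead trivialize $F$ over a finite atlas of relatively compact charts, express $\alpha_i$ in each local frame, apply Arzel\`a--Ascoli chart by chart, diagonalize across the finitely many $(k,a)$, and patch the limits using the transition functions. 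Both are reductions to the scalar case; the paper's version avoids your patching step but relies on the (true, though unstated and slightly mislabelled) global spanning set, whereas yours is purely local and requires no global structure on $F$. You also make explicit the bookkeeping the paper glosses over — namely that uniform bounds on $\alpha_i$ and on its (covariant/exterior) derivative transfer to uniform $C^0$ and Lipschitz bounds on the component functions because the frames and their derivatives are bounded above and below on the compact closures $\overline{V_k}$ — which is the only genuinely non-formal step in either argument.
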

\begin{proof}
 Take a finite set of sections $\beta_1,\beta_2,\dots,\beta_N$ of $F$ such that, for each $x\in M$, the set $\{\beta_1(x),\dots,\beta_N(x)\}$ is a basis for the fiber of $F$ at $x$. Express each $\alpha_i$ as $\alpha_i(x)=\sum_{j=1}^N\phi_i^j(x)\beta_N$, for some smooth functions $\phi_i^j$. Note that the uniform boundedness of $d\alpha_i$ implies the uniform boundedness of the derivatives $d\phi_i^j$. Apply the classical Arzel\`a--Ascoli result for Lipschitz functions to the sequences $\{\phi_i^j\}_{i=1}^\infty$, $j=1,\dots,N$, successively so as to obtain a subsequence for which all $N$ sequences converge simultaneously to Lipschitz functions $\phi^j$, which gives the statement of the lemma with $\alpha(x)=\sum_{j=1}^N\phi^j(x)\beta_j$.
\end{proof} 

 \bibliography{bib}{}
 \bibliographystyle{plain}
\end{document}